\newcommand{\R}{\mathds R}
\numberwithin{equation}{section}
\title[Examples with minimal number of brake orbits and homoclinics]{Examples with minimal number of brake orbits and homoclinics in
annular potential regions}
\author[R. Giamb\`o]{Roberto Giamb\`o}
\author[F. Giannoni]{Fabio Giannoni}
\address{Scuola di Scienze e Tecnologie, Sezione di Matematica\hfill\break\indent
Universit\`a di Camerino \hfill\break\indent
 I-62032 Camerino (MC), Italy}
\email{roberto.giambo@unicam.it, fabio.giannoni@unicam.it}
\author[P. Piccione]{Paolo Piccione}
\address{Departamento de Matem\'atica \hfill\break\indent Universidade de S\~ao Paulo \hfill\break\indent
Rua do Mat\~ao, 1010 \hfill\break\indent 05508-090 S\~ao Paulo, SP, Brazil}
\email{piccione.p@gmail.com}
\urladdr{http://www.ime.usp.br/\~{}piccione}
\date{September 18th, 2012}
\subjclass[2000]{37C29, 37J45, 58E10}
\begin{document}
%\linenumbers

% Theorems and such

\theoremstyle{plain}\newtheorem{teo}{Theorem}[section]
\theoremstyle{plain}\newtheorem{prop}[teo]{Proposition}
\theoremstyle{plain}\newtheorem{lem}[teo]{Lemma}
\theoremstyle{plain}\newtheorem{cor}[teo]{Corollary}
\theoremstyle{definition}\newtheorem{defin}[teo]{Definition}
\theoremstyle{remark}\newtheorem{rem}[teo]{Remark}
\theoremstyle{definition}\newtheorem{example}[teo]{Example}

\theoremstyle{plain}\newtheorem*{teon}{Theorem}

%%%%%

\begin{abstract}
We use a geometric construction to exhibit examples of autonomous Lagrangian
systems admitting exactly two homoclinics emanating from a nondegenerate
maximum of the potential energy
and reaching  a regular level of the potential having the same value of the maximum point. 
Similarly, we show examples of Hamiltonian
systems that admit exactly two brake orbits in an annular potential region connecting the two connected components of the boundary of the potential well.
These examples show that the estimates proven in \cite{arma} are sharp.
\end{abstract}

\maketitle

\renewcommand{\contentsline}[4]{\csname nuova#1\endcsname{#2}{#3}{#4}}
\newcommand{\nuovasection}[3]{\medskip\hbox to \hsize{\vbox{\advance\hsize by -1cm\baselineskip=12pt\parfillskip=0pt\leftskip=3.5cm\noindent\hskip -2cm #1\leaders\hbox{.}\hfil\hfil\par}$\,$#2\hfil}}
\newcommand{\nuovasubsection}[3]{\medskip\hbox to \hsize{\vbox{\advance\hsize by -1cm\baselineskip=12pt\parfillskip=0pt\leftskip=4cm\noindent\hskip -2cm #1\leaders\hbox{.}\hfil\hfil\par}$\,$#2\hfil}}

\tableofcontents

%%%%%%%%%
%%%%%%%%%
\section{Introduction}
\label{sec:intro}
Let $(M,g)$ be a complete Riemannian manifold, $\Omega\subset M$
an open subset whose closure is homeomorphic to an annulus.
In \cite{arma} it was proved
that if $\partial \Omega$ is smooth and it satisfies a
strong concavity assumption, then there are at least two distinct
geodesics  in $\overline\Omega=\Omega \bigcup\partial\Omega$ starting orthogonally to one
connected component of $\partial\Omega$ and arriving orthogonally onto
the other one. Using Maupertuis' principle and the results in \cite{GGP1}, one
obtains a proof of the existence  of at least two distinct homoclinic orbits for an autonomous
Lagrangian system emanating from a  nondegenerate maximum point of
the potential energy and reaching  a regular level of the potential having the same value of the maximum point. Moreover one obtain also a proof of the existence of at least two distinct
{\em brake orbits\/} for a class of Hamiltonian systems, connecting the two connected components of the boundary of the potential well. 
In this paper we show, by construction of examples, that this kind of estimate cannot be improved.

Recall that Maupertuis' principle relates solutions of dynamical systems at a fixed energy
level, with geodesics in metrics that are conformal to the kinetic energy (Jacobi metrics),
see \cite{GGP1} for details.
It is easy to produce examples of \emph{spherical annuli}, i.e., Riemannian annuli with strongly concave boundary,
that admit only two orthogonal geodesic chords, see \cite{arma}.
However, in principle the mere existence of spherical annuli with only two orthogonal geodesic chords
does not provide examples of Lagrangian or Hamiltonian systems as above with only two
homoclinics or brake orbits at a given energy level. Namely, such examples do not necessarily
arise from dynamical systems via Maupertuis' principle. The point addressed in the
present paper is to describe a sort of inverse process of Maupertuis' principle, in other
words, how to go from orthogonal geodesics to brake orbits. Given a spherical annulus,
we construct a $C^2$-potential function $V$ in an open subset of $\mathbb{R}^N$, and an energy level $E$ which is
non critical for $V$, such that the corresponding brake orbits of energy $E$ and connecting the two different connecteed components of the boundary of the potential well are into one-to-one
correspondence with the orthogonal geodesic chords in the spherical annulus.

It is an interesting observation that the construction of such a potential
$V$ relies on an abstract characterization of geodesics in conformal metrics
which is proved in Section~\ref{sec:costruzioneGeo}. Such a characterization, which has an
independent interest in its own, is given in terms of a sort of converse of the classical Gauss Lemma
in Riemannian geometry. Roughly speaking, a family of curves starting orthogonally to a given hypersurface of a Riemannian manifold
are geodesics in some conformal metric if and only if Gauss Lemma holds for the family, see Proposition~\ref{thm:recGaussLemma} for a precise statement.

In Theorem \ref{thm:deuBO} it is got the existence of potentials having only two brake orbits connecting the two connected components of the potential well. the analogous results for homoclinics is given in Theorem \ref{thm:deuHOM}.

The problem of the multiplicity  for brake orbits and homoclinics is  widely studied. We refer for example to the papers \cite{arma}--\cite{ZhangLiu} and reference therein, for an updated list of results in this field.

The paper is organized as follows. In section \ref{sec:gauss} we give a result concerning constructions of
potentials with prescribed geodesics for the related conformal metric. 
In section \ref{sec:OGCBO} we give the concrete construction of the potentials extending a given potential in the unit disk. Finally,
in section \ref{sec:examples} we use the results of section \ref{sec:OGCBO} to show that the estimates of th multiplicity results proved in \cite{arma} cannot be improved.

\section{Prescribing orthogonal geodesics in conformal metrics}\label{sec:gauss}
\label{sec:costruzioneGeo}
Gauss Lemma in Riemannian Geometry says that the exponential map, although not an isometry, preserves
orthogonality with respect to radial geodesics. In this section we prove a converse of this statement for
conformal metrics: given a family of curves in a Riemannian manifold, issuing orthogonally from a given hypersurface,
then they are unit speed orthogonal geodesics with respect to some conformal metric if and only if Gauss Lemma
holds.
\smallskip

Let $(M,g)$ be a Riemannian manifold, $\Sigma\subset M$ a hypersurface of $M$ and let
$y:[a,b]\times\Sigma\to M$ be a $\mathcal C^k$-diffeomorphism onto a domain $D\subset M$,
with $k\ge2$ and $0\in\left]a,b\right[$, satisfying:
\begin{align}
&y(0,p)=p,\quad\forall\,p\in\Sigma;\label{eq:itema}
\\
&\frac{\partial y}{\partial\tau}(p,0)\in T_p\Sigma^\perp,\quad\forall\,p\in\Sigma.\label{eq:itemb}
\end{align}
\begin{prop}\label{thm:recGaussLemma}
There exists a positive $\mathcal C^{k-1}$-function $\Psi:D\to\mathds R^+$ such that, for all $p\in\Sigma$,
the curve $[a,b]\ni\tau\mapsto y(\tau,p)$ is a unit speed geodesic with respect to the conformal metric $\widetilde g=\Psi\cdot g$
in $D$ if and only if:
\begin{equation}\label{eq:gausslemma}
g\big(\tfrac{\partial y}{\partial\tau}(\tau,p),\mathrm dy(\tau,p)[w]\big)=0,
\end{equation}
for all $(\tau,p)\in[a,b]\times\Sigma$ and all $w\in T_p\Sigma$. In this case, the conformal factor $\Psi$ is given by:
\begin{equation}\label{eq:conffactor}
\Psi\big(y(\tau,p)\big)=g\Big(\tfrac{\partial y}{\partial\tau}(\tau,p),\tfrac{\partial y}{\partial\tau}(\tau,p)\Big)^{-1}.
\end{equation}
\end{prop}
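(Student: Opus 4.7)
The plan is to establish the two implications separately. For the forward direction, the fact that $y(\cdot,p)$ being a unit-speed $\widetilde g$-geodesic implies \eqref{eq:gausslemma} reduces to the classical Gauss Lemma for the $\widetilde g$-normal exponential map of $\Sigma$, combined with two elementary observations: orthogonality is preserved by conformal rescaling, and the unit-speed requirement rigidly determines $\Psi$.

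More precisely, the identity $\widetilde g\bigl(\tfrac{\partial y}{\partial\tau},\tfrac{\partial y}{\partial\tau}\bigr)=\Psi\,g\bigl(\tfrac{\partial y}{\partial\tau},\tfrac{\partial y}{\partial\tau}\bigr)\equiv 1$ forces formula \eqref{eq:conffactor}. By \eqref{eq:itemb} and the fact that $g$ and $\widetilde g$ share the same orthogonal complements, the initial velocity $\tfrac{\partial y}{\partial\tau}(0,p)$ is $\widetilde g$-orthogonal to $T_p\Sigma$, so $y(\cdot,p)$ is a unit-speed $\widetilde g$-normal geodesic emanating from $\Sigma$. Applying the classical Gauss Lemma in $(M,\widetilde g)$ then yields $\widetilde g\bigl(\tfrac{\partial y}{\partial\tau},\mathrm dy[w]\bigr)=0$ for every $\tau\in[a,b]$ and every $w\in T_p\Sigma$, which by conformality is exactly \eqref{eq:gausslemma}.

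For the converse, my approach is to pull back local coordinates from $[a,b]\times\Sigma$ to $D$ via $y$: choose a chart $(p^i)$ on an open subset of $\Sigma$ and work in the induced coordinates $(\tau,p^i)$ on the image in $D$, so that $\partial_\tau$ corresponds to $\tfrac{\partial y}{\partial\tau}$ and $\partial_{p^i}$ to $\mathrm dy[\partial_{p^i}]$. Hypothesis \eqref{eq:gausslemma} then translates into $g_{\tau i}\equiv 0$, so $g$ takes the block form $g=g_{\tau\tau}\,d\tau^{2}+h_{ij}(\tau,p)\,dp^{i}\,dp^{j}$ with $g_{\tau\tau}=g\bigl(\tfrac{\partial y}{\partial\tau},\tfrac{\partial y}{\partial\tau}\bigr)$. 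Defining $\Psi$ by \eqref{eq:conffactor}, the rescaled metric $\widetilde g=\Psi g$ has constant $\tau$-components: $\widetilde g_{\tau\tau}\equiv 1$ and $\widetilde g_{\tau i}\equiv 0$.

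It remains to verify that each curve $\tau\mapsto y(\tau,p_{0})$ is a $\widetilde g$-geodesic, equivalently that $\widetilde\Gamma^{\mu}_{\tau\tau}\equiv 0$. This is immediate from the Koszul formula
\[
\widetilde\Gamma^{\mu}_{\tau\tau}=\tfrac{1}{2}\,\widetilde g^{\mu\nu}\bigl(2\,\partial_\tau\widetilde g_{\tau\nu}-\partial_\nu\widetilde g_{\tau\tau}\bigr),
\]
since every $\widetilde g_{\tau\nu}$ is a constant in the coordinates $(\tau,p^i)$. Hence $\widetilde\nabla_{\partial_\tau}\partial_\tau=0$, so the integral curves of $\partial_\tau$ are $\widetilde g$-geodesics, parametrized by arc length by construction. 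The main point — which in fact poses no serious obstacle once spotted — is the recognition that the Gauss-Lemma-type orthogonality \eqref{eq:gausslemma} is precisely the hypothesis that forces $g$ into a block-diagonal form whose $\tau$-components become constants after the canonical rescaling \eqref{eq:conffactor}; everything else is then a routine coordinate computation.
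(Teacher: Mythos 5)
Your proof is correct, and the two implications deserve separate comment. The forward direction coincides with the paper's argument: both deduce \eqref{eq:gausslemma} from the classical Gauss Lemma applied to the $\widetilde g$-normal exponential map of $\Sigma$, after observing that conformal rescaling preserves orthogonality (so \eqref{eq:itemb} gives $\widetilde g$-orthogonal initial velocities) and that the unit-speed normalization forces the factor \eqref{eq:conffactor}. For the converse you take a genuinely different route. The paper stays coordinate-free: it writes the geodesic equation for $\widetilde g=\Psi\,g$ in terms of the $g$-covariant derivative, namely $\tfrac{\mathrm D}{\mathrm d\tau}\big[\Psi(\gamma)\,\gamma'\big]=\tfrac12\,g(\gamma',\gamma')\,\nabla\Psi(\gamma)$ as in \eqref{eq:geoconfmetric}, notes that the contraction with $\gamma'$ holds automatically because $\Psi(\gamma)g(\gamma',\gamma')$ is constant, and verifies the contraction with $\mathrm dy[w]$ by a direct computation that uses \eqref{eq:gausslemma} twice. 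You instead pull back the product coordinates $(\tau,p^i)$ through the diffeomorphism $y$, observe that \eqref{eq:gausslemma} is exactly $g_{\tau i}\equiv0$, so that after the rescaling \eqref{eq:conffactor} one has $\widetilde g_{\tau\tau}\equiv1$ and $\widetilde g_{\tau i}\equiv0$ (Fermi-type coordinates for $\widetilde g$ adapted to $\Sigma$), whence $\widetilde\Gamma^{\mu}_{\tau\tau}=0$ and the $\tau$-lines are unit-speed $\widetilde g$-geodesics; since being a geodesic is a local condition and $\Psi$ in \eqref{eq:conffactor} is defined chart-independently, the chartwise computation suffices, and the $C^{k-1}$ regularity and positivity of $\Psi$ follow from $y$ being a $C^k$-diffeomorphism. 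What the paper's computation buys is the explicit conformal-geodesic equation \eqref{eq:geoconfmetric}, which displays the role of the conformal factor (the same structure underlying the Jacobi-metric/Maupertuis setting used later in the paper); what your version buys is brevity and transparency, since the hypothesis \eqref{eq:gausslemma} literally says the pulled-back metric is block diagonal and the geodesic property then drops out of the Christoffel formula with no further calculation.
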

\begin{proof}
If the curves $\tau\mapsto y(\tau,p)$ are geodesics with respect to some conformal metric $\widetilde g$, then
equality \eqref{eq:gausslemma} follows immediately from Gauss Lemma. Namely, the map $y$ is the $\widetilde g$-exponential map
of $T\Sigma^\perp$, and therefore, for $w\in T_p\Sigma^\perp$:
\[\widetilde g\big(\tfrac{\partial y}{\partial\tau}(\tau,p),\mathrm dy(\tau,p)[w]\big)=\widetilde g\big(\tfrac{\partial y}{\partial\tau}(0,p),\mathrm dy(0,p)[w]\big)
\stackrel{\text{by \eqref{eq:itema}}}{=}\widetilde g\big(\tfrac{\partial y}{\partial\tau}(0,p),w\big)\stackrel{\text{by \eqref{eq:itemb}}}{=}0.\]
For the converse, we first observe that the metric $\widetilde g=\Psi\cdot g$, with $\Psi$ given in \eqref{eq:conffactor}, is the only metric conformal
to $g$ for which the curves $\tau\mapsto y(\tau,p)$ have unit speed. In particular, once the first statement of the Proposition is proved, the
last statement is trivial.  In order to conclude the proof, it suffices to show that, assuming \eqref{eq:gausslemma}, then the curves $\tau\mapsto y(\tau,p)$
are geodesics relatively to the metric $\widetilde g=\Psi\cdot g$, with $\Psi$ given in \eqref{eq:conffactor}.
An elementary calculation show that $\gamma:[a,b]\to D$ is a $\widetilde g$-geodesic iff it satisfies:
\begin{equation}\label{eq:geoconfmetric}
\frac{\mathrm D}{\mathrm d\tau}\Big[\Psi(\gamma)\,\gamma'\Big]=\tfrac12\,g(\gamma',\gamma')\,\nabla\Psi(\gamma),
\end{equation}
where $\frac{\mathrm D}{\mathrm d\tau}$ denotes $g$-covariant differentiation along $\gamma$, $\gamma'$ is the tangent field along $\gamma$,
and $\nabla\Psi$ is the $g$-gradient of $\Psi$.
Set $\gamma(\tau)=y(\tau,p)$; $g$-contraction of both sides of \eqref{eq:geoconfmetric} with $\gamma'=\tfrac{\partial y}{\partial\tau}$ gives an equality, as a consequence of the fact
 that $\Psi(\gamma)g(\gamma',\gamma')$ is constant along $\gamma$. Since $\mathrm dy$ is onto, in order to conclude it suffices to show that
 \begin{equation}\label{eq:geoconfmetric2}
g\left(\tfrac{\mathrm D}{\mathrm d\tau}\big[\Psi(y)\,\tfrac{\partial y}{\partial\tau}\big],\mathrm dy[w]\right)=\tfrac12\,g\big(\tfrac{\partial y}{\partial\tau},\tfrac{\partial y}{\partial\tau}\big)\,g\big(\nabla\Psi(y),\mathrm dy[w]\big),
\end{equation}
for all $w\in T_p\Sigma^\perp$.
Straightforward calculations give:
\begin{multline*}
g\big(\nabla\Psi(y),\mathrm dy[w]\big)=-2\,g\big(\tfrac{\partial y}{\partial\tau},\tfrac{\partial y}{\partial\tau}\big)^{-2}g\big(\nabla_{\mathrm dy[w]}(\tfrac{\partial y}{\partial\tau}),\tfrac{\partial y}{\partial\tau}\big)\\=-2\,g\big(\tfrac{\partial y}{\partial\tau},\tfrac{\partial y}{\partial\tau}\big)^{-2}g\big(\tfrac{\mathrm D}{\mathrm d\tau}\mathrm dy[w],\tfrac{\partial y}{\partial\tau}\big)\\=
-2\,g\big(\tfrac{\partial y}{\partial\tau},\tfrac{\partial y}{\partial\tau}\big)^{-2}\,\tfrac{\mathrm d}{\mathrm d\tau}\,g\big(\mathrm dy[w],\tfrac{\partial y}{\partial\tau}\big)+
2\,g\big(\tfrac{\partial y}{\partial\tau},\tfrac{\partial y}{\partial\tau}\big)^{-2}g\big(\mathrm dy[w],\tfrac{\mathrm D}{\mathrm d\tau}\tfrac{\partial y}{\partial\tau}\big)
\\ \stackrel{\text{by \eqref{eq:gausslemma}}}{=}2\,g\big(\tfrac{\partial y}{\partial\tau},\tfrac{\partial y}{\partial\tau}\big)^{-2}g\big(\mathrm dy[w],\tfrac{\mathrm D}{\mathrm d\tau}\tfrac{\partial y}{\partial\tau}\big),
\end{multline*}
thus the right-hand side of \eqref{eq:geoconfmetric2} is given by:
\begin{equation}\label{eq:RD}
\tfrac12\,g\big(\tfrac{\partial y}{\partial\tau},\tfrac{\partial y}{\partial\tau}\big)\,g\big(\nabla\Psi(y),\mathrm dy[w]\big)=g\big(\tfrac{\partial y}{\partial\tau},\tfrac{\partial y}{\partial\tau}\big)^{-1}
g\big(\mathrm dy[w],\tfrac{\mathrm D}{\mathrm d\tau}\tfrac{\partial y}{\partial\tau}\big).
\end{equation}
The left-hand side of \eqref{eq:geoconfmetric2} is also computed easily as:
\begin{multline}\label{eq:LD}
g\left(\tfrac{\mathrm D}{\mathrm d\tau}\big[\Psi(y)\,\tfrac{\partial y}{\partial\tau}\big],\mathrm dy[w]\right)\\=g\big(\nabla\Psi(y),\tfrac{\partial y}{\partial\tau}\big)\,g\big(\tfrac{\partial y}{\partial\tau},\mathrm dy[w]\big)+\Psi(y)\,
g\big(\mathrm dy[w],\tfrac{\mathrm D}{\mathrm d\tau}\tfrac{\partial y}{\partial\tau}\big)\\
\stackrel{\text{by \eqref{eq:gausslemma}}}{=}g\big(\tfrac{\partial y}{\partial\tau},\tfrac{\partial y}{\partial\tau}\big)^{-1}
g\big(\mathrm dy[w],\tfrac{\mathrm D}{\mathrm d\tau}\tfrac{\partial y}{\partial\tau}\big).
\end{multline}
Equality \eqref{eq:geoconfmetric2} follows readily from \eqref{eq:RD} and \eqref{eq:LD}. The proof is completed.
\end{proof}

\section{From Orthogonal Geodesic Chords to Brake Orbits}\label{sec:OGCBO}
\label{sec:C2counterexample}
In this section we will establish a one-to-one correspondence between Jacobi geodesics arriving on the boundary
of the potential well and orthogonal geodesic chords in a conformally flat metric.

\subsection{Family of geodesics orthogonal to the sphere}
We will first define families of curves orthogonal to the unit sphere $\mathbb S^{N-1}$, seen as a hypersurface of $\mathds R^{N}$,
satisfying the assumptions of Proposition~\ref{thm:recGaussLemma}. This will be done using maximal slope curves for a given function, as follows.
The method that we will use can be extended to very general situations.
Let us denote by $\mathrm{grad}$ the gradient with respect to the Euclidean structure.

\begin{prop}\label{costruzione-geo}
Let $F : \mathds{R}^{N}\setminus \{0\}\rightarrow \mathds{R}$ and $\phi: [a,b]\rightarrow \mathds{R}^+\setminus \{0\}$ be
maps of class $C^{\infty}$, and $-\infty < a < 0 < b < +\infty$. Assume  that
\begin{equation}\label{eq:ipotesi-su-F}
\begin{split}
F(P)=0 \text{ for any }P\in \mathbb{S}^{N-1},  \\[.3cm] \inf_{x \in F^{-1}([\alpha,\beta])}\Vert \text{grad }F(x)\Vert > 0,
\end{split}
\end{equation}
where $\alpha < 0 < \beta$ are defined by
\[
\alpha=\int_0^a\phi(\theta)\,\mathrm d\theta, \;\beta=\int_0^b\phi(\theta)\,\mathrm d\theta.
\]
For all $P\in\mathbb S^{N-1}$, let  $\tau\mapsto y(P,\tau)$ be the local solution of the Cauchy problem:
\begin{equation}\label{eq:FCauchy1}
\left\{
\begin{aligned}
&\frac{\partial y}{\partial \tau}(P,\tau)= \phi(\tau)\frac{\text{ grad }F(y(P,\tau))}{\Vert \text{ grad }F(y(P,\tau)) \Vert^2}\\[.3cm]
& y(P,0)=P \in \mathbb{S}^{N-1}.
\end{aligned}
\right.
\end{equation}
Then $y(P,\tau)$ is well defined for any $P \in \mathbb{S}^{N-1}, \tau \in [a,b]$, and it satisfies \eqref{eq:itema},
\eqref{eq:itemb}  and \eqref{eq:gausslemma} with $g=g_0$ euclidean metric. Moreover $y:\mathbb{S}^{N-1} \times [a,b] \rightarrow F^{-1}([a,b])$ is a $C^{\infty}$--diffeomorphism.
\end{prop}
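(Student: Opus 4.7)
The plan rests on the observation that the vector field $X:=\mathrm{grad}\,F/\|\mathrm{grad}\,F\|^2$ is smooth on the level region $F^{-1}([\alpha,\beta])$, thanks to the non-vanishing assumption \eqref{eq:ipotesi-su-F}, and satisfies $\mathrm{grad}\,F\cdot X\equiv 1$, so that $F$ plays the role of a time function along its integral curves. Differentiating $F$ along a trajectory of \eqref{eq:FCauchy1} yields
\begin{equation*}
\tfrac{\mathrm d}{\mathrm d\tau}F\bigl(y(P,\tau)\bigr)=\phi(\tau),\qquad\text{hence}\qquad F\bigl(y(P,\tau)\bigr)=\int_0^\tau\phi(s)\,\mathrm ds\in[\alpha,\beta].
\end{equation*}
Therefore, as long as the trajectory exists, it is confined to $F^{-1}([\alpha,\beta])$, where the bound $\|\partial_\tau y\|=\phi(\tau)/\|\mathrm{grad}\,F(y)\|\le\|\phi\|_{\infty}/c$ holds with $c>0$ the infimum in \eqref{eq:ipotesi-su-F}. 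A standard non-escape argument then gives global existence on $[a,b]$.

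Having existence, conditions \eqref{eq:itema}, \eqref{eq:itemb} and \eqref{eq:gausslemma} follow immediately. The first is the initial condition; \eqref{eq:itemb} holds because $F\equiv 0$ on $\mathbb{S}^{N-1}$ forces $\mathrm{grad}\,F(P)$ to be normal to $T_P\mathbb{S}^{N-1}$, and $\partial_\tau y(P,0)$ is proportional to $\mathrm{grad}\,F(P)$; for \eqref{eq:gausslemma}, differentiating the identity $F(y(P,\tau))=\int_0^\tau\phi$ in a tangential direction $w\in T_P\mathbb{S}^{N-1}$ gives $\mathrm{grad}\,F(y)\cdot\mathrm dy[w]=0$, while $\partial_\tau y$ remains a positive multiple of $\mathrm{grad}\,F(y)$.

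For the diffeomorphism claim I would reparametrize time by $t=\beta(\tau):=\int_0^\tau\phi(s)\,\mathrm ds$, a $C^\infty$ diffeomorphism $[a,b]\to[\alpha,\beta]$, which transforms \eqref{eq:FCauchy1} into the autonomous ODE $\tfrac{\mathrm d y}{\mathrm dt}=X(y)$. Its flow $\Phi_t$ on $F^{-1}([\alpha,\beta])$ is complete by the non-escape bound and, because $\mathrm{grad}\,F\cdot X\equiv 1$, it carries $F^{-1}(s)$ diffeomorphically onto $F^{-1}(s+t)$. Writing $y(P,\tau)=\Phi_{\beta(\tau)}(P)$, a smooth inverse is
\begin{equation*}
x\longmapsto\bigl(\Phi_{-F(x)}(x),\,\beta^{-1}(F(x))\bigr),
\end{equation*}
smoothness of $y$ and of its inverse following from smooth dependence of ODE solutions on initial data.

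The delicate point I expect to dwell on is the surjectivity onto the codomain $F^{-1}([\alpha,\beta])$: since $F$ grows at unit rate along each flow line of $X$, every integral curve crosses $F^{-1}(0)$ exactly once, but one must identify this unique crossing with a point of $\mathbb{S}^{N-1}$. This forces the implicit requirement that, in the connected component of $F^{-1}([\alpha,\beta])$ under consideration, the zero level set of $F$ coincide with $\mathbb{S}^{N-1}$; once this is granted, the flow description of $y$ renders the global diffeomorphism property automatic.
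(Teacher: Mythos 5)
Your proposal is correct, and for the first half it coincides with the paper's argument: both derive $F\bigl(y(P,\tau)\bigr)=\int_0^\tau\phi$, use the gradient lower bound to confine the trajectories in $F^{-1}([\alpha,\beta])$ with bounded speed (hence global existence on $[a,b]$), and obtain \eqref{eq:itema}, \eqref{eq:itemb}, \eqref{eq:gausslemma} from the parallelism of $\partial_\tau y$ with $\mathrm{grad}\,F$ and the constancy of $F\circ y$ in $P$. Where you genuinely diverge is the diffeomorphism claim. The paper proceeds in several separate steps: it differentiates the Cauchy problem to get a linear ODE for $v_i=\mathrm dy[e_i]$, deducing that $\mathrm dy$ is everywhere nonsingular (via linear independence plus orthogonality to $\partial_\tau y$); it proves injectivity from the strict monotonicity of $\tau\mapsto\int_0^\tau\phi$ and uniqueness for the Cauchy problem; it proves surjectivity by solving a backward Cauchy problem; and it proves continuity of the inverse by a sequential compactness argument. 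You instead pass to the autonomous field $X=\mathrm{grad}\,F/\|\mathrm{grad}\,F\|^2$ via the time change $t=\int_0^\tau\phi$, write $y(P,\tau)=\Phi_{\int_0^\tau\phi}(P)$, and exhibit the explicit inverse $x\mapsto\bigl(\Phi_{-F(x)}(x),\,(\int_0^{\cdot}\phi)^{-1}(F(x))\bigr)$; smooth dependence on initial data then gives bijectivity and $C^\infty$ regularity of the inverse in one stroke, subsuming the paper's nonsingularity and continuity-of-inverse steps (your backward flow is, in substance, the paper's surjectivity construction, but packaged as a global smooth inverse). Note also that both arguments need that, in the region swept, $F^{-1}(0)$ coincides with $\mathbb{S}^{N-1}$ (the stated hypothesis only gives $\mathbb{S}^{N-1}\subseteq F^{-1}(0)$): the paper uses this silently when it writes ``$P_*=Y(y_*,\tau_*)\in F^{-1}(0)=\mathbb{S}^{N-1}$'', whereas you flag it explicitly as an implicit requirement; in the paper's application to $F_\epsilon$ this identification does hold near the sphere, so no harm is done. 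Minor cosmetic point: your $\beta(\tau)$ clashes with the constant $\beta$ already defined in the statement.
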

\begin{proof}
First note that by \eqref{eq:FCauchy1},
\[
\frac{\partial }{\partial \tau}\,F\big(y(P,\tau)\big) = \phi(\tau) \text{ for any }P\in \mathbb{S}^{N-1}, \; \tau \in [a,b].
\]
Since $F(y(P,0))\equiv 0$, we have
\begin{equation}\label{eq:Fsuy}
F\big(y(P,\tau)\big)=\int_0^{\tau}\phi(\theta)\,\mathrm d\theta \text{ for any }P \in \mathbb{S}^{N-1}, \; \tau \in [a,b].
\end{equation}
This shows that $y$ takes values in $F^{-1}\big([\alpha,\beta]\big)$, and thus, by  \eqref{eq:ipotesi-su-F} , $y(P,\tau)$ is well defined for  any $P \in \mathbb{S}^{N-1}, \tau \in [a,b]$. Moreover since $F$ and $\phi$ are of class $C^{\infty}$, also $y$ is.
Formula \eqref{eq:Fsuy}  also proves that $F\big(y(P,\tau)\big)$ is constant on $\mathbb S^{N-1}\times\{\tau\}$ for all $\tau$.
Using the fact that $\text{grad }F\big(y(P,\tau)\big)$ is parallel to $\frac{\partial y}{\partial \tau}(P,\tau)$, this implies easily \eqref{eq:gausslemma}.

Let us now show that $\mathrm dy$ is everywhere non singular. Given $P\in\mathbb S^{N-1}$, let $e_1,\ldots,e_{N-1}$ be
a basis of $T_P\mathbb S^{N-1}$, and set:
\[
v_i(P,\tau) = \mathrm dy(P,\tau)[e_i], \quad i=1,\ldots,{N-1}.
\]
Differentiating \eqref{eq:FCauchy1}, one obtains that $v_i$ satisfies the linear Cauchy problem
\begin{equation}\label{eq:FCauchyLin}
\left\{
\begin{aligned}
&\frac{\partial v_i}{\partial \tau}= \phi(\tau)\,\mathrm dG\big(y(P,\tau)\big)[v_i]\\[.3cm]
& v_i(0)=e_i,
\end{aligned}
\right.
\end{equation}
where $G$ is the map $G(y)=\frac{\text{ grad }F(y)}{\Vert \text{ grad }F(y)) \Vert^2}$.
Since the  $v_i$'s are linearly independent at $\tau=0$, they are everywhere linearly independent.
Moreover, by  \eqref{eq:gausslemma} , $\frac {\partial y}{\partial \tau}$ is always orthogonal to the $v_i$'s ,
and this implies that $\mathrm dy$ is everywhere nonsingular.

Moreover \eqref{eq:itema} is just the initial condition of the Cauchy problem \eqref{eq:FCauchy1}, while \eqref{eq:itemb} follows immediately
from the fact that, for any $P \in \mathbb{S}^{N-1}$, $\frac{\partial y}{\partial \tau}(P,0)$ is parallel to $\text{grad }F(P)$, which is orthogonal to $\mathbb{S}^{N-1}=F^{-1}(0)$.

It remains to prove that
$y:\mathbb{S}^{N-1} \times [a,b] \rightarrow F^{-1}([a,b])$ is a homeomorphism.
For the injectivity, assume $y(P_1,\tau_1)=y(P_2,\tau_2)$. Since $\phi$ is positive, the map $\tau \mapsto \int_0^{\tau}\phi(\theta)\,\mathrm d\theta$ is strictly increasing, therefore by  \eqref{eq:Fsuy},  we deduce $\tau_1=\tau_2$. Then, by the local uniqueness of the Cauchy problem
\eqref{eq:FCauchy1} we get also $P_1=P_2$.

To prove the surjectivity, fix $y_* \in F^{-1}([\alpha,\beta])$ and solve the Cauchy problem
\begin{equation}\label{eq:FCauchy2b}
\left\{
\begin{aligned}
&Y'= -\phi(\tau)\frac{\text{ grad }F(Y)}{\Vert \text{ grad }F(Y) \Vert^2}\\
& Y(0)=y_*
\end{aligned}
\right.
\end{equation}
and denote by $Y(y_*,\tau)$ its solution. Let $\tau_*$ be such that
\[\int_0^{\tau_*}\phi(\theta)\,\mathrm d\theta=F(y_*)\]
and take $P_*=Y(y_*,\tau_*)$, which is in $F^{-1}(0)=\mathbb{S}^{N-1}$. Then, $y(P_*,\tau_*)=y_*$, i.e. $y$ is surjective.
Finally, to prove the continuity of the inverse map, take a sequence $y_n$ in $F^{-1}([\alpha,\beta])$ such that $y_n \rightarrow y_0$.
Now let $(P_n,\tau_n)$ such that $y(P_n,\tau_n)=y_n$. Up to subsequences, we can assume that $(P_n,\tau_n)$
converges to $(P_0,\tau_0)$. But $y(P_n,\tau_n)$ converges to
$y(P_0,\tau_0)=y_0$,
and by injectivity there exists only one pair $(P_0,\tau_0)$ satisfying $y(P,\tau)=y_0$. This implies that every subsequence of $(P_n,\tau_n)$ converges
to $(P_0,\tau_0)$, which gives the continuity of the inverse map.
\end{proof}

\subsection{Maupertuis' Principle}
\label{sub:Maupertuis}
For the convenience of the reader, we will now give a statement of the classical Maupertuis principle in our framework.
Given $A\subset\mathds R^{N}$, a $C^2$-function $W:A\to\mathds R$, and a number $E > \sup_AW$,
we will denote by $g_{W,E}$ the conformally flat metric
\begin{equation}\label{eq:Jacobi}
g_{W,E}=\tfrac12\big(E-W(x)\big)g_0.
\end{equation}
This will be called the \emph{Jacobi metric with potential $E-W$}. The following result is the classical Maupertuis principle in our setting.
\begin{lem}\label{cambio-variabile}
Let $y:\mathbb S^{N-1}\times[a,b]\to D\subset\mathds R^{N}$ be a diffeomorphism of class $C^2$ onto a domain
$D\subset\mathds R^{N}$, satisfying  \eqref{eq:gausslemma}. Assume that $s\mapsto y(P,s)$ is a unit speed geodesic in $D$
for the metric \eqref{eq:Jacobi}, with potential $W:D\to\mathds R$ defined by $W=\mathcal W\circ y^{-1}$, where
$\mathcal W:\mathbb S^{N-1}\times[a,b]$ is:
\begin{equation}\label{eq:W}
\mathcal W(P,s)=E-{2}{\left\langle \frac{\partial y}{\partial s},\frac{\partial y}{\partial s}\right\rangle}^{-1}.
\end{equation}
Take
\begin{equation}\label{eq:ripJacobi}
t(P,s)=\int_a^s\frac{dr}{E-\mathcal W(P,r)}
\end{equation}
and denote by $\sigma(t)=\sigma(P,t)$ the inverse map of $t(P,\cdot)$, so that $\sigma$ solves the Cauchy problem
\begin{equation}\label{eq:sigmacauchyproblem}
\frac{d\sigma}{dt}=\left({\frac{dt}{ds}}\right)^{-1}=E-W\big(y(P,\sigma(t))\big),\quad \sigma(0)=0.
\end{equation}
Then, for any fixed $P$, the curve
\[
z(P,t)=y(P,\sigma(t))
\]
solves the dynamic equation
\begin{equation}\label{eq:Wdynamic-equation}
\ddot z(t) + \text{ grad }W(z(t))= 0,
\end{equation}
and the energy conservation law:
\begin{equation}\label{eq:consene}
\frac12\left\langle \frac{\partial z}{\partial t}(P,t),\frac{\partial z}{\partial t}(P,t)\right\rangle +W(z(P,t))\equiv E.
\qed\end{equation}
\end{lem}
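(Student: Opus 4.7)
The plan is to use Proposition~\ref{thm:recGaussLemma} to convert the hypothesis that $s\mapsto y(P,s)$ is a unit-speed $g_{W,E}$-geodesic into a concrete differential equation in $s$, and then to apply the chain rule under the time change $s=\sigma(t)$ to obtain \eqref{eq:Wdynamic-equation} and \eqref{eq:consene}.

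First, I would observe that the formula \eqref{eq:W} is precisely engineered so that the Jacobi conformal factor agrees with the one supplied by \eqref{eq:conffactor}, namely
\[
\Psi(y):=\tfrac12\bigl(E-W(y)\bigr)=\Bigl\langle\tfrac{\partial y}{\partial s},\tfrac{\partial y}{\partial s}\Bigr\rangle^{-1}.
\]
Under the standing assumption \eqref{eq:gausslemma}, Proposition~\ref{thm:recGaussLemma} (with $g=g_0$) then guarantees that each curve $s\mapsto y(P,s)$ is a unit-speed geodesic of $\Psi\,g_0=g_{W,E}$, and satisfies the conformal geodesic equation \eqref{eq:geoconfmetric}. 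Using $\nabla\Psi=-\tfrac12\,\mathrm{grad}\,W$ and the unit-speed identity $\tfrac12(E-W)\langle y_s,y_s\rangle=1$, this equation becomes
\[
\tfrac{d}{ds}\!\left[\tfrac12(E-W)\,y_s\right]=-\tfrac{\mathrm{grad}\,W(y)}{2(E-W)}.
\]

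Second, I would differentiate $z(P,t)=y(P,\sigma(t))$ twice in $t$. From \eqref{eq:sigmacauchyproblem}, $\dot\sigma=E-W(z)$, hence $\dot z=(E-W)\,y_s\big|_{s=\sigma}$. A further differentiation, together with $\ddot\sigma=-\langle\mathrm{grad}\,W(z),\dot z\rangle=-(E-W)\langle\mathrm{grad}\,W,y_s\rangle$, yields
\[
\ddot z=(E-W)^{2}\,y_{ss}-(E-W)\langle\mathrm{grad}\,W,y_s\rangle\,y_s.
\]
Expanding the $s$-derivative in the preceding displayed equation gives
\[
\tfrac12(E-W)\,y_{ss}-\tfrac12\langle\mathrm{grad}\,W,y_s\rangle\,y_s=-\tfrac{\mathrm{grad}\,W(y)}{2(E-W)},
\]
and multiplying both sides by $2(E-W)$ identifies the left-hand side with $\ddot z$ and the right-hand side with $-\mathrm{grad}\,W(z)$, establishing the Newton equation \eqref{eq:Wdynamic-equation}.

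Third, the energy conservation law \eqref{eq:consene} is a direct substitution from the unit-speed normalization: $\langle\dot z,\dot z\rangle=\dot\sigma^{2}\langle y_s,y_s\rangle=(E-W)^{2}\cdot\tfrac{2}{E-W}=2(E-W)$, hence $\tfrac12\langle\dot z,\dot z\rangle+W(z)\equiv E$. The only step requiring any thought is the first one, namely recognizing that the definition \eqref{eq:W} is exactly what makes the Jacobi conformal factor $\tfrac12(E-W)$ coincide with the factor \eqref{eq:conffactor} coming from Proposition~\ref{thm:recGaussLemma}; once this identification is made, no real obstacle remains and the remainder is a routine chain-rule computation.
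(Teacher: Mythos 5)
Your proof is correct. Note that the paper itself gives no proof of this lemma: it is stated as the classical Maupertuis principle ``for the convenience of the reader'', with the \verb|\qed| placed at the end of the statement, so there is nothing to compare line by line; your argument simply supplies the standard verification. Two small remarks. First, invoking Proposition~\ref{thm:recGaussLemma} is not strictly needed: the lemma already \emph{assumes} that $s\mapsto y(P,s)$ is a unit-speed geodesic for $g_{W,E}$, so the conformal geodesic equation \eqref{eq:geoconfmetric} with $\Psi=\tfrac12(E-W)$ holds directly; your observation that \eqref{eq:W} makes this $\Psi$ coincide with \eqref{eq:conffactor} is nevertheless a useful consistency check rather than an error. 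Second, the chain-rule computation is sound: from $\dot\sigma=E-W(z)$ you get $\dot z=(E-W)\,y_s$, $\ddot\sigma=-\langle\mathrm{grad}\,W(z),\dot z\rangle$, and multiplying the expanded form of \eqref{eq:geoconfmetric} by $2(E-W)$ exactly reproduces $\ddot z=-\mathrm{grad}\,W(z)$, while the unit-speed normalization $\tfrac12(E-W)\langle y_s,y_s\rangle=1$ gives \eqref{eq:consene}. (The discrepancy between $\sigma(0)=0$ and $t(P,a)=0$, which would force $\sigma(0)=a$, is a typo in the paper's statement and does not affect your argument.)
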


\subsection{Construction of a global potential}
Now, we  choose $v \in \mathbb{S}^{N-1}$  and consider the potential
\begin{equation}\label{eq:uepsilon}
U_\epsilon(x)=\frac12\Vert x-\epsilon v\Vert^2,
\end{equation}
defined in the unit disk of $\mathds{R}^{N}$, where $\epsilon > 0$ will be fixed later.

 The construction of the $C^2$--potentials to obtain our examples in section \ref{sec:examples} is based on the idea of fixing fix $E > \frac12$ and extending the potential $U_\epsilon$ to a suitable potential $V_\epsilon$ outside the unit disk, so that $V_\epsilon^{-1}\big(\left]-\infty,E\right]\big)$ is homeomorphic to $B[0;1]$ and $E$ is a regular value of $V_\epsilon$. (The application to the topology of the annulus will be made in next section). The essential property of $V_\epsilon$ will be that all the solutions of the Cauchy problem
\begin{equation}\label{eq:startingBO}
\left\{
\begin{aligned}
& \ddot q + \text{grad }V_\epsilon(q) = 0 \\
& q(0) \in V_{\epsilon}^{-1}(E)\\
& \dot q(0)=0
\end{aligned}
\right.
\end{equation}
arrive orthogonally to $\mathbb{S}^{N-1}$.

First of all, we will use Proposition~\ref{costruzione-geo} to construct a family $y_\epsilon(P,\tau)$ of curves solving the Cauchy problem~\eqref{eq:FCauchy1}, where
 $\phi(\tau)={(1-\tau)^{-\frac13}}$ (this choice is suggested by the behavior of the Jacobi geodesics near the regular boundary of any potential well), and $F$
 is taken to be the function $F_\epsilon$ defined by:
\begin{equation}\label{F-epsilon}
F_{\epsilon}(x) = \sum_{i=1}^3\frac1{i!}f_{i,\epsilon}\left(\frac x{\Vert x\Vert}\right)\,\big(\Vert x\Vert-1\big)^i;
\end{equation}
note that this function vanishes on $\mathbb{S}^{N-1}$. Thus, for all $P\in\mathbb S^{N-1}$, the map $\tau\mapsto y_\epsilon(P,\tau)$ is a solution of the Cauchy problem:
\begin{equation}\label{eq:FCauchy2}
\left\{
\begin{aligned}
&\frac{\partial y}{\partial \tau}= {(1-\tau)^{-\frac13}}\frac{\text{ grad }F_\epsilon(y)}{\Vert \text{ grad }F_\epsilon(y) \Vert^2}\\[.3cm]
& y(0)=P.
\end{aligned}
\right.
\end{equation}
The $C^{\infty}$-maps $f_{i,\epsilon}$, ($i=1,2,3$) that appear in the definition of  $F_\epsilon$ in \eqref{F-epsilon} will be chosen to
obtain a $C^2$-matching on $\mathbb{S}^{N-1}$ between $U_\epsilon$ and the potential
\begin{equation}\label{W-epsilon}
\mathcal W_\epsilon(P,\tau)=E-2(1-\tau)^{\frac23}\langle \text{grad }F_\epsilon(y_\epsilon(P,\tau)),\text{grad }F_\epsilon(y_\epsilon(P,\tau))\rangle,
\end{equation}
coming from \eqref{eq:W} (with $y=y_\epsilon$).
\begin{prop}\label{th:F+V}
Fix $E \geq 4$. There exist $\epsilon_* \in\left]0,1\right]$, a bounded open subset $\mathcal O$ containing $B[0;1]$, and $f_{i,\epsilon} : \mathbb{S}^{N-1} \rightarrow \mathds{R}^+$ of class $C^\infty$, ($i=1,2,3$), such that, for any $\epsilon \in\left]0,\epsilon_*\right]$ the following properties are satisfied:

\begin{enumerate}
\item\label{en:1} $y_\epsilon(P,\tau)$ is defined with $C^\infty$ regularity on $\mathbb{S}^{N-1} \times [0,1[$, and it admits a continuous extension
to $\mathbb{S}^{N-1} \times [0,1]$;
\item\label{en:2} $\langle \frac{\partial y_\epsilon}{\partial \tau},y\rangle  > 0$ for any $P\in \mathbb{S}^{N-1}$, $\tau \in\left[0,1\right[$;
\item\label{en:1bis} $y_\epsilon(\cdot,1): \mathbb{S}^{N-1} \rightarrow {\mathcal W}_{\epsilon}^{-1}(E)$ is a homeomorphism;
\item\label{en:4} the potential
\begin{equation}\label{eq:matching}
V_\epsilon(x)=
\left\{
\begin{aligned}
& W_\epsilon(x) \text{ if } \Vert x \Vert > 1,\\
& U_\epsilon(x) \text{ if } \Vert x \Vert \leq 1,
\end{aligned}
\right.
\end{equation}
where $W_\epsilon = {\mathcal W}_{\epsilon} \circ y_{\epsilon}^{-1}$, is of class $C^2$ in an open neighborhood of $V_\epsilon^{-1}\big(\left]-\infty,E\right]\big)$;
\item\label{en:6} $V_\epsilon^{-1}\big(\left]-\infty,E\right]\big)$ is homeomorphic to $B[0;1]$;
\item\label{en:7} $\mathrm{grad }V_\epsilon(x) \neq 0$ for any $x \in V_\epsilon^{-1}(E)$;
\item\label{en:5} as $\epsilon \rightarrow 0$, the potential $V_\epsilon$ converges to a radial potential $V_0$ with respect to the $C^2$--topology in $\bar{\mathcal O}$; $B[0;1] \subset V_0^{-1}\big(\left]-\infty,E\right[\big)$, $V_0^{-1}\big(\left]-\infty,E\right]\big) \subset \mathcal O$, and $E$ is a regular value for $V_0$.
\end{enumerate}
\end{prop}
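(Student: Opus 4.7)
I would determine the three coefficients $f_{1,\epsilon},f_{2,\epsilon},f_{3,\epsilon}$ by imposing that the glued potential \eqref{eq:matching} be $C^2$ across $\mathbb S^{N-1}$, and then obtain the remaining items from Proposition~\ref{costruzione-geo} applied to $F_\epsilon$, together with direct computation. The polynomial structure of $F_\epsilon$ in $\Vert x\Vert-1$ is chosen precisely so that the three matching equations (value, first and second outward radial derivatives of $W_\epsilon$ on $\mathbb S^{N-1}$) form a triangular system: the $k$-th one involves $f_{k,\epsilon}$ affinely with a non-vanishing coefficient depending on $f_{1,\epsilon},\ldots,f_{k-1,\epsilon}$. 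Since $W_\epsilon$ and $U_\epsilon$ are individually $C^\infty$ on their open domains, and since tangential derivatives on $\mathbb S^{N-1}$ automatically agree once values agree, this matching of normal jets up to order two gives \eqref{en:4} by construction.

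\textbf{Solving the matching system.} At $\tau=0$ one has $y_\epsilon(P,0)=P$, $\mathrm{grad}\,F_\epsilon(P)=f_{1,\epsilon}(P)\,P$ and $\partial_\tau y_\epsilon(P,0)=P/f_{1,\epsilon}(P)$, and the $k$-th outward radial derivative of $F_\epsilon$ at $P$ equals $f_{k,\epsilon}(P)$. Plugging into \eqref{W-epsilon}, the identity $W_\epsilon|_{\mathbb S^{N-1}}=U_\epsilon|_{\mathbb S^{N-1}}$ reads
\[
E-2f_{1,\epsilon}(P)^2=\tfrac12\Vert P-\epsilon v\Vert^2,
\]
so $f_{1,\epsilon}(P)=\bigl(\tfrac E2-\tfrac14\Vert P-\epsilon v\Vert^2\bigr)^{1/2}\geq 1$ whenever $E\geq4$ and $\epsilon\in\left]0,1\right]$, giving a smooth positive solution. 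Differentiating \eqref{W-epsilon} in $\tau$ and using the chain-rule identity $\partial_r W_\epsilon(P)=f_{1,\epsilon}(P)\,\partial_\tau\mathcal W_\epsilon(P,0)$ yields a linear equation in $f_{2,\epsilon}$ with leading coefficient a nonzero rational expression in $f_{1,\epsilon}$, which once equated with $\partial_r U_\epsilon(P)=1-\epsilon\langle v,P\rangle$ determines $f_{2,\epsilon}$; a second differentiation does the same for $f_{3,\epsilon}$. All three coefficients depend smoothly on $(P,\epsilon)\in\mathbb S^{N-1}\times[0,1]$, and at $\epsilon=0$ they are constants by radial symmetry.

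\textbf{Verification and main obstacle.} Proposition~\ref{costruzione-geo} applied on each $[0,1-\delta]$ gives $y_\epsilon\in C^\infty(\mathbb S^{N-1}\times[0,1[)$, proving \eqref{en:1}; since $\int_0^1(1-\theta)^{-1/3}\mathrm d\theta=\tfrac32<\infty$ and $\Vert\mathrm{grad}\,F_\epsilon\Vert$ is bounded below on $F_\epsilon^{-1}([0,\tfrac32])$ for small $\epsilon$ (by continuity from the radial case $\epsilon=0$), the bound $\Vert\partial_\tau y_\epsilon\Vert\leq C(1-\tau)^{-1/3}$ is $L^1$, so $y_\epsilon$ extends continuously to $\tau=1$, landing on $\mathcal W_\epsilon^{-1}(E)$ by \eqref{W-epsilon}: this gives \eqref{en:1bis}. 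Item \eqref{en:2} follows because $\langle\partial_\tau y_\epsilon,y_\epsilon\rangle$ is, up to the positive factor $\phi(\tau)/\Vert\mathrm{grad}\,F_\epsilon\Vert^2$, the radial derivative of $F_\epsilon$ along $y_\epsilon$, which is positive for $\epsilon$ small by perturbation from $\epsilon=0$. For \eqref{en:6}, $V_\epsilon^{-1}(\left]-\infty,E\right])$ is the unit ball glued with the topological collar $y_\epsilon(\mathbb S^{N-1}\times[0,1])$, still a ball. Item \eqref{en:7} follows from the expansion $E-W_\epsilon(x)\sim c(P)\,\mathrm{dist}(x,\mathcal W_\epsilon^{-1}(E))$ with $c(P)>0$, obtained by inverting the $(1-\tau)^{2/3}$-scaling. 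Finally, smooth dependence of $f_{i,\epsilon}$ and of the flow \eqref{eq:FCauchy2} on $\epsilon$ propagates to $C^2$-convergence $V_\epsilon\to V_0$ on a fixed bounded neighbourhood $\overline{\mathcal O}$ of $V_0^{-1}(\left]-\infty,E\right])$, proving \eqref{en:5}. The principal obstacle I expect is the analysis near $\tau=1$: the singular coupling between $\phi(\tau)=(1-\tau)^{-1/3}$ and the $(1-\tau)^{2/3}$ factor in \eqref{W-epsilon} must be tracked carefully to check that $W_\epsilon$ extends $C^2$-smoothly up to the level set $\mathcal W_\epsilon^{-1}(E)$ and that the non-degeneracy in \eqref{en:7} is uniform in $\epsilon$.
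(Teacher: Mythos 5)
Your strategy coincides with the paper's: the triangular system for $f_{1,\epsilon},f_{2,\epsilon},f_{3,\epsilon}$ (your formula for $f_{1,\epsilon}$ is exactly \eqref{eq:C0} in disguise), the a priori confinement of the flow and the lower bound on $\Vert\mathrm{grad}\,F_\epsilon\Vert$ by perturbation from the radial case $\epsilon=0$, the $L^1$ bound giving the continuous extension to $\tau=1$, and the gluing argument for \eqref{en:6} are all as in the paper. The computations you do carry out are correct.

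The genuine gap is precisely the point you defer at the end: the $C^2$ regularity of $W_\epsilon=\mathcal W_\epsilon\circ y_\epsilon^{-1}$ \emph{up to the outer boundary} $\mathcal W_\epsilon^{-1}(E)$, and the non-vanishing of $\mathrm{grad}\,V_\epsilon$ there. This cannot be dismissed as a routine tracking of exponents: in the $(P,\tau)$ coordinates the map $y_\epsilon$ is \emph{not} a diffeomorphism at $\tau=1$ (the velocity $\phi(\tau)=(1-\tau)^{-1/3}$ blows up), so "$W_\epsilon$ is individually $C^\infty$ on its open domain" is exactly what fails to be obvious on the level set $\{V_\epsilon=E\}$, which is where \eqref{en:7} must be checked. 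The paper closes this with one clean device you are missing: the reparametrization $\tau=\psi(s)=1-\left(1-\tfrac23 s\right)^{3/2}$, under which $\tilde y_\epsilon(P,s)=y_\epsilon(P,\psi(s))$ solves the \emph{non-singular} Cauchy problem \eqref{eq:FCauchy3} with $\phi\equiv1$; Proposition~\ref{costruzione-geo} then makes $\tilde y_\epsilon$ a $C^\infty$ diffeomorphism from a neighborhood of $\mathbb S^{N-1}\times[0,\tfrac32]$ onto a neighborhood of $F_\epsilon^{-1}([0,\tfrac32])$, so that in the $(P,s)$ coordinates
\[
\mathcal W_\epsilon(P,s)=E-2\big\langle \mathrm{grad}\,F_\epsilon(\tilde y_\epsilon(P,s)),\mathrm{grad}\,F_\epsilon(\tilde y_\epsilon(P,s))\big\rangle\big(1-\tfrac23 s\big)
\]
is manifestly $C^\infty$ across $s=\tfrac32$, with $\frac{\partial\mathcal W_\epsilon}{\partial s}=\tfrac13\Vert\mathrm{grad}\,F_\epsilon\Vert^2>0$ on the level $E$. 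This single computation delivers both the smoothness needed for \eqref{en:4} away from $\mathbb S^{N-1}$ and item \eqref{en:7} (uniformly in $\epsilon$, by \eqref{eq:normagrad-epsilon}); the singularity of $\phi$ at $\tau=1$ is revealed as a pure artifact of parametrization. You should also make explicit the a priori bound $\Vert y_\epsilon(P,\tau)\Vert\le\tfrac52$ (from $\int_0^1(1-s)^{-1/3}\,\mathrm ds=\tfrac32$), which is what licenses restricting the gradient estimates to the fixed compact annulus $1\le\Vert x\Vert\le3$ and hence your "by continuity from $\epsilon=0$" step.
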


\begin{proof}
In order to study the global properties of the solution $y_\epsilon$ it is necessary to use some specific properties of the \emph{generating function} $F_\epsilon$
defined in \eqref{F-epsilon}. Initially, it must be observed that the maps $f_{i,\epsilon}$ are uniquely determined in such a way to obtain a $C^2$--matching on $\mathbb{S}^{N-1}$ between $U_\epsilon$ and $W_\epsilon$. Indeed, using $(P,\tau)$ as coordinates (recall Proposition \ref{costruzione-geo}), we have that the potential ${\mathcal W}_\epsilon=W_\epsilon\circ y_\epsilon$  is given by \eqref{W-epsilon}, while $U_\epsilon\circ y_\epsilon$ is given by the map
\[
(P,\tau) \mapsto \frac12\Vert y_\epsilon(P,\tau) - \epsilon v\Vert^2.
\]
Straightforward technical computations show that $C^0$, $C^1$ and $C^2$--regularity are equivalent to the relations
\begin{equation}\label{eq:C0}
f_{1,\epsilon}(P)= \frac12\sqrt{2E-1+2\epsilon \langle P,v\rangle  - \epsilon^2},
\end{equation}
\begin{equation}\label{eq:C1}
f_{2,\epsilon}(P)= \frac14\Big(\frac43 f_{1,\epsilon}^2 - \frac{1}{f_{1,\epsilon}}(1-\epsilon\langle P,v\rangle )\Big),
\end{equation}
\begin{multline}\label{eq:C2}
f_{3,\epsilon}(P)=
\frac 19f_{1,\epsilon}^3
+f_{2,\epsilon}f_{1,\epsilon}
-\frac{2}{f_{1,\epsilon}}\left(\Vert \text{grad }f_{1,\epsilon} \Vert ^2
-\langle \text{grad }f_{1,\epsilon},P\rangle ^2\right)+\\
-\frac 1{12}-\frac 1{4f_{1,\epsilon}}+\frac{f_{2,\epsilon}}{4f_{1,\epsilon}^2}+ \\
\frac{\epsilon}{4}\left[ \frac{\langle\text{grad }f_{1,\epsilon},v\rangle}{f_{1,\epsilon}^2}+\langle P,v\rangle\left(
\frac 13-\frac{f_{2,\epsilon}+{\langle\text{grad }f_{1,\epsilon},P\rangle}}{f_{1,\epsilon}^2}
\right)\right].
\end{multline}
respectively, which are well defined for any $\epsilon$ sufficiently small. Note that,

\begin{multline}\label{limitefiepsilon}
\text{For any } i=1,2,3, \; \lim_{\epsilon \rightarrow 0^+}=f_i, \text{ where the $f_i$'s are the constant maps }\\
f_{1}= \frac12\sqrt{2E-1},
f_{2}= \frac14\Big(\frac43 f_{1}^2 - \frac{1}{f_{1}}\Big),
f_{3}=\frac 19f_{1}^3
+f_{2}f_{1}-\frac 1{12}-\frac 1{4f_{1}}+\frac{f_{2}}{4f_{1}^2},\\
\text{ and the limit is meant in the $C^k$--topology, for all $k\geq 0$}.
\end{multline}

\medskip

Since $E \geq 4$, we have in particular
\begin{equation}\label{fiproperties}
f_1 > 1, f_2 > 0, f_3 > 0.
\end{equation}
Now consider the annulus
\[
B_*=\big\{x \in \mathds{R}^{N}: 1 \leq \Vert x \Vert \leq 3\big\}.
\]
By \eqref{limitefiepsilon}, as $\epsilon\to0$ the map
$F_\epsilon$ converges to the map $F_0$ in the $C^1$--topology on $B_*$, where

\begin{equation}\label{eq:effe0}
F_{0}(x)= \sum_{i=1}^3\frac1{i!}f_{i}\big(\Vert x\Vert-1\big)^i.
\end{equation}
By \eqref{fiproperties},
\begin{equation*}
\Vert x \Vert \geq 1 \Rightarrow \langle \text{grad }F_0(x),x\rangle  > 1,
\end{equation*}
\begin{equation*}
\Vert x \Vert \geq 1 \Rightarrow \Vert \text{grad }F_0(x) \Vert > 1.
\end{equation*}
Then, by \eqref{limitefiepsilon}, for $\epsilon$ sufficiently small,
\begin{equation}\label{eq:monotonia-epsilon}
\langle \text{grad }F_\epsilon(x),x\rangle  \geq 1 \text{ for any }x \in B_*,
\end{equation}
\begin{equation}\label{eq:normagrad-epsilon}
\Vert \text{grad }F_\epsilon(x) \Vert \geq 1, \text{ for any }x \in B_*.
\end{equation}

Now fix $P \in \mathbb{S}^{N-1}$ and denote by $\left[0,\tau(P)\right[$  the maximal interval where $y_\epsilon(P,\tau)$ is defined.
For any $\tau \in \left[0,\tau(P)\right[$ we have
\[
\big\Vert y_\epsilon(P,\tau)-y_\epsilon(P,0)\big\Vert \leq \int_0^\tau\left\Vert \frac{\partial y_\epsilon}{\partial \tau} \right\Vert \,\mathrm ds \leq \int_0^\tau \frac{\mathrm ds}{(1-s)^{\frac13}}
\leq \int_0^1 \frac{\mathrm ds}{(1-s)^{\frac13}} = \frac32.
\]
Then:
\[
\Vert y_\epsilon(P,\tau) \Vert \leq \tfrac52,\quad \text{ for all }\tau \in \left[0,\tau(P)\right[,
\]
and using \eqref{eq:FCauchy2},  \eqref{eq:monotonia-epsilon} and \eqref{eq:normagrad-epsilon}, we obtain \eqref{en:1} and \eqref{en:2}.
Note that the continuous extendibility to $\tau=1$ comes from integrability of ${(1-\tau)^{-\frac13}}$ on $[0,1]$).

Now consider the reparameterization $\tilde y_\epsilon(P,s)$ of $y_\epsilon(P,\tau)$ defined by
\begin{equation}\label{eq:ripyeps}
\tilde y_\epsilon(P,s) = y_\epsilon(P,\psi(s)), \; \tau=\psi(s)=1-\left(1-\frac23s\right)^{\frac32}.
\end{equation}

A simple computation shows that $\tilde y_\epsilon$ solves the Cauchy Problem
\begin{equation}\label{eq:FCauchy3}
\left\{
\begin{aligned}
&\frac{\partial y}{\partial \tau}= \frac{\text{ grad }F_\epsilon(y)}{\Vert \text{ grad }F_\epsilon(y) \Vert^2}\\
& y(0)=P \in \mathbb{S}^{N-1}.
\end{aligned}
\right.
\end{equation}
Proposition \ref{costruzione-geo}, applied with $\phi\equiv 1$, proves part~\eqref{en:1bis}, and shows that $\tilde y_\epsilon$ is a $C^\infty$ diffeomorphism between a neighborhood of $\mathbb{S}^{N-1} \times [0,\frac32]$
and a neighborhood of $F_\epsilon^{-1}\big([0,\frac32]\big)$. This yields the $C^\infty$--regularity of $W_\epsilon$ in a neighborhood of $W_\epsilon^{-1}(E)$, since,
using coordinates $(P,s)$, the potential $W_\epsilon$ is described by
\begin{equation}\label{eq:vus}
\mathcal W_\epsilon(P,s)=E-2\langle \text{grad }F_\epsilon\big(\tilde y_\epsilon(P,s)\big),\text{grad }F_\epsilon\big(\tilde y_\epsilon(P,s)\big)\rangle\big(1-\tfrac23s\big),
\end{equation}
while
\[
\mathcal W_\epsilon(P,s) = E \quad \Longleftrightarrow\quad s=\frac32.
\]
Then, using the $C^2$--regularity of $V_\epsilon$ in a neighborhood of $\mathbb{S}^{N-1}$, we deduce part~\eqref{en:4}.

In order to prove part~\eqref{en:6}, observe that
\[
\big\{x: V_{\epsilon}(x) \leq E\big\}= B[0;1] \cup B_0,
\]
where $B_0=\big\{\tilde y_{\epsilon}(P,\tau): P \in \mathbb{S}^{N-1},\ s \in\left[0,\tfrac32\right]\big\}$.
The conclusion follows since $B[0;1] \cap B_0 = \mathbb{S}^{N-1}$, $\tilde y_{\epsilon}(P,0)=P$ for every $P \in \mathbb{S}^{N-1}$ and $B_0$
is homeomorphic to $\mathbb{S}^{N-1} \times\big[0,\frac32\big]$.

Finally, part~\eqref{en:7} comes from the property
\[
\mathcal W_\epsilon = E\quad \Longrightarrow\quad \frac{\partial \mathcal W_\epsilon}{\partial s}= \tfrac13 \big\Vert \text{ grad }F_\epsilon\left(\tilde y_\epsilon(P,3/2)\right)\big\Vert^2 \not=0,
\]
while the existence of $\mathcal O$ satisfying part~\eqref{en:5} is a consequence of property \eqref{limitefiepsilon}.
\end{proof}

In next two Lemmas we will establish the correspondence between Orthogonal Geodesic Chords and Jacobi geodesics arriving on the boundary of the potential well.

\begin{lem}\label{lem:passo1}
Let $\epsilon_*$ be as in Proposition \ref{th:F+V}, and let $\epsilon \in\left]0,\epsilon_*\right]$ be fixed.
There exists a continuous map $\mathbb S^{N-1}\ni P\mapsto T_{\epsilon}(P) > 0$ such that the solution $q_\epsilon(P,t)$ of the
Cauchy problem with total energy $E$
\begin{equation}\label{eq:startingesterne}
\left\{
\begin{aligned}
& \ddot q + \text{grad }V_\epsilon(q) = 0 \\
& q(0) = P \\
& \dot q(0)=\lambda_{\epsilon}P,
\end{aligned}
\right.
\end{equation}
where \[\lambda_{\epsilon}=\sqrt{2E-1+2\epsilon\langle P,v\rangle -\epsilon^2},\] reaches $V_\epsilon^{-1}(E)$ at the time $T_{\epsilon}(P)$
and $V_\epsilon(P,q_\epsilon(t)) < E$ for any $t \in\left[0,T_\epsilon(P)\right[$. Moreover, $t\mapsto\Vert q_\epsilon(P,t)\Vert $ is strictly increasing
for all $P$.
\end{lem}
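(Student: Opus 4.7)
The strategy is to realize $q_\epsilon(P,t)$ as the Maupertuis time-reparameterization of the Jacobi geodesic $\tau\mapsto y_\epsilon(P,\tau)$ constructed in Proposition~\ref{th:F+V}.

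First, I would verify that $y_\epsilon(P,\cdot)$ is unit-speed with respect to the Jacobi metric $g_{W_\epsilon,E}$. By Propositions~\ref{costruzione-geo} and~\ref{thm:recGaussLemma} applied with $F=F_\epsilon$ and $\phi(\tau)=(1-\tau)^{-1/3}$, each such curve is a unit-speed geodesic for the conformal metric $\Psi\cdot g_0$ with
\[
\Psi=\|\partial_\tau y_\epsilon\|^{-2}=(1-\tau)^{2/3}\|\mathrm{grad}\,F_\epsilon(y_\epsilon)\|^2,
\]
and by \eqref{W-epsilon} this equals exactly $\tfrac12(E-\mathcal W_\epsilon(P,\tau))$, i.e., the Jacobi conformal factor. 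Lemma~\ref{cambio-variabile} then produces, for each $P$, the time-change
\[
t(P,\tau)=\int_0^\tau\frac{d\theta}{E-\mathcal W_\epsilon(P,\theta)},
\]
whose inverse in $\tau$ I denote by $\sigma(P,\cdot)$; setting $z_\epsilon(P,t):=y_\epsilon(P,\sigma(P,t))$, the curve $z_\epsilon$ solves $\ddot z+\mathrm{grad}\,W_\epsilon(z)=0$ with energy $E$.

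Second, I would match the Cauchy data. Clearly $z_\epsilon(P,0)=P$. Since $F_\epsilon$ vanishes on $\mathbb S^{N-1}$ with normal derivative $f_{1,\epsilon}(P)$ at $P$, $\mathrm{grad}\,F_\epsilon(P)=f_{1,\epsilon}(P)P$, so \eqref{eq:FCauchy2} gives $\partial_\tau y_\epsilon(P,0)=P/f_{1,\epsilon}(P)$. Combining $\sigma'(P,0)=E-W_\epsilon(P)=E-U_\epsilon(P)=\tfrac12\lambda_\epsilon^2$ with the matching identity \eqref{eq:C0}, which says $2f_{1,\epsilon}(P)=\lambda_\epsilon$, one obtains $\dot z_\epsilon(P,0)=\lambda_\epsilon P$. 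By part~\eqref{en:2} of Proposition~\ref{th:F+V} we have $\tfrac{d}{d\tau}\|y_\epsilon\|^2>0$, so the trajectory $z_\epsilon$ never enters the open unit ball, where $V_\epsilon$ differs from $W_\epsilon$; since $V_\epsilon=W_\epsilon$ on $\{\|x\|\ge1\}$, uniqueness for \eqref{eq:startingesterne} identifies $q_\epsilon(P,\cdot)$ with $z_\epsilon(P,\cdot)$, and the strict monotonicity of $t\mapsto\|q_\epsilon(P,t)\|$ follows immediately from $\tfrac{d}{dt}\|z_\epsilon\|^2=2\sigma'(P,t)\langle y_\epsilon,\partial_\tau y_\epsilon\rangle>0$.

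Finally, I would set $T_\epsilon(P):=t(P,1)$. From \eqref{W-epsilon} and the uniform bound \eqref{eq:normagrad-epsilon}, $E-\mathcal W_\epsilon(P,\theta)$ behaves like a positive multiple of $(1-\theta)^{2/3}$ as $\theta\to 1^-$, so the integrand is dominated by a constant multiple of $(1-\theta)^{-2/3}$, which is integrable; hence $T_\epsilon(P)<\infty$. At $t=T_\epsilon(P)$ we have $\sigma(P,t)=1$, so $q_\epsilon(P,T_\epsilon(P))=y_\epsilon(P,1)\in V_\epsilon^{-1}(E)$, while for $t\in[0,T_\epsilon(P))$ one has $\sigma(P,t)<1$, hence $V_\epsilon(q_\epsilon(P,t))<E$. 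Continuity of $T_\epsilon$ then follows from the continuous dependence of $y_\epsilon$ and $\|\mathrm{grad}\,F_\epsilon(y_\epsilon)\|$ on $P$, together with dominated convergence on the integrable majorant. The main technical obstacle is the algebraic matching that equates $\dot z_\epsilon(P,0)$ with $\lambda_\epsilon P$; the choice $\phi(\tau)=(1-\tau)^{-1/3}$ is exactly what makes the Maupertuis time-change converge at $\tau=1$, and the remaining claims reduce to bookkeeping on this reparameterization.
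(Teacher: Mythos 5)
Your proposal is correct and follows essentially the same route as the paper's proof: both realize $q_\epsilon(P,\cdot)$ as the Maupertuis reparameterization $y_\epsilon(P,\sigma(P,\cdot))$ of the Jacobi geodesics from Proposition~\ref{th:F+V}, define $T_\epsilon(P)$ as the time at which $\sigma$ reaches $1$ (your explicit integral $t(P,1)$ is equivalent to the paper's implicit characterization via $2\int_0^{T_\epsilon(P)}\Vert\mathrm{grad}\,F_\epsilon\Vert^2\,\mathrm ds=3$), and deduce the monotonicity of $t\mapsto\Vert q_\epsilon(P,t)\Vert$ from part~\eqref{en:2} of Proposition~\ref{th:F+V}. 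Your explicit verification of the initial velocity $\dot q(0)=\lambda_\epsilon P$ via $2f_{1,\epsilon}=\lambda_\epsilon$, and the remark that the trajectory stays in $\{\Vert x\Vert\ge 1\}$ where $V_\epsilon=W_\epsilon$, are details the paper leaves implicit.
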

\begin{proof}
Let $y_\epsilon$ be the solution of the Cauchy problem \eqref{eq:FCauchy2}. Using Proposition~\ref{thm:recGaussLemma}
and Proposition~\ref{costruzione-geo}, for any fixed $P \in \mathbb{S}^{N-1}$ the curve $\tau \mapsto y_\epsilon(P,\tau)$ ($\tau \in [0,1[$) is a unit speed geodesic with respect to the Jacobi metric \eqref{eq:Jacobi} where $W=W_\epsilon$.

Now by  Maupertuis Principle (cf. Lemma \ref{cambio-variabile}) the curve
\[
q_\epsilon(P,t)=y_\epsilon(P,\sigma(P,t)),
\]
with $\sigma$ solution of \eqref{eq:sigmacauchyproblem}, is a solution of \eqref{eq:startingesterne} having total energy $E$.

Now note that $q_\epsilon$ reaches $V_\epsilon^{-1}(E)$ when $\sigma(P,t)$ reaches the value $1$, so we have to prove that there exists a continuous map
$P\mapsto T_\epsilon(P)$ such that
\begin{equation}\label{eq:tep}
\sigma\big(P,T_\epsilon(P)\big)=1, \; \sigma(P,t) < 1 \text{ for any }t \in\left[0,T_\epsilon(P)\right[.
\end{equation}
Now, by \eqref{eq:sigmacauchyproblem},
\begin{equation*}
\left\{
\begin{aligned}
& \dot \sigma = E-\mathcal W_\epsilon(y_\epsilon(P,\sigma(t)) = 2(1-\sigma(t))^{\frac23}\big\Vert \text{ grad }F_\epsilon(y_\epsilon(P,\sigma(t)))\big\Vert^2 \\
& \sigma(0) = 0.
\end{aligned}
\right.
\end{equation*}
Therefore
\[
\int_0^t \frac{\dot \sigma(s)}{(1-\sigma(s))^{\frac23}}\,\mathrm ds = 2\int_0^t\big\Vert \text{ grad }F_\epsilon(P,\sigma(P,s))\big\Vert^2 \,\mathrm ds
\]
and, since $\sigma(0)=0$, $T_\epsilon(P)$ is uniquely defined by
\[
2\int_{0}^{T_\epsilon(P)}\big\Vert \text{ grad }F_\epsilon(y_\epsilon(P,\sigma(s)))\big\Vert^2 \,\mathrm ds = \int_0^1 \frac{\,\mathrm ds}{(1-s)^{\frac23}} = 3,
\]
from which we deduce \eqref{eq:tep}.
Finally, by \eqref{en:2} of Proposition \ref{th:F+V} we have immediately that $t\mapsto\big\Vert  q_\epsilon(t) \big\Vert$ is strictly increasing
and the proof is complete.
\end{proof}

\begin{lem}\label{lem:passo2}
Let $T_\epsilon(P)$ be as in Lemma~\ref{lem:passo1}.
For any $x\in V_\epsilon^{-1}(E)$ there exists a unique $P \in \mathbb{S}^{N-1}$ such that the solution $q(x,t)$ of \eqref{eq:startingBO} with $q(0)=x$
satisfies
\begin{equation}\label{eq:gammaPx}
q(x,t)=q_\epsilon\big(P,T_\epsilon(P)-t)\big),
\end{equation}
where  $q_\epsilon(P,t)$ is
the solution of \eqref{eq:startingesterne} in Lemma~\ref{lem:passo1}.
\end{lem}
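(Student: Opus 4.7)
The plan is to exploit the time-reversal symmetry of the second-order ODE in \eqref{eq:startingBO} together with the injectivity/surjectivity information already packaged in Proposition~\ref{th:F+V}\eqref{en:1bis}. More precisely, I would first set up an end-point map $\Phi_\epsilon:\mathbb S^{N-1}\to V_\epsilon^{-1}(E)$ that sends $P$ to the terminal point of the orbit constructed in Lemma~\ref{lem:passo1}, show that this map is a bijection, and then read off the statement by uniqueness of the Cauchy problem.

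The first step is the crucial observation that the terminal velocity vanishes. Indeed, $q_\epsilon(P,\cdot)$ conserves energy by \eqref{eq:consene}, and by Lemma~\ref{lem:passo1} we have $V_\epsilon\bigl(q_\epsilon(P,T_\epsilon(P))\bigr)=E$, so
\[
\tfrac12\bigl|\dot q_\epsilon(P,T_\epsilon(P))\bigr|^2 = E-V_\epsilon\bigl(q_\epsilon(P,T_\epsilon(P))\bigr)=0.
\]
Hence, since the Newton equation is invariant under $t\mapsto -t$, the curve
\[
\bar q(t)\;:=\;q_\epsilon\bigl(P,T_\epsilon(P)-t\bigr),\qquad t\in[0,T_\epsilon(P)],
\]
satisfies $\ddot{\bar q}+\mathrm{grad}\,V_\epsilon(\bar q)=0$, $\bar q(0)=q_\epsilon(P,T_\epsilon(P))\in V_\epsilon^{-1}(E)$ and $\dot{\bar q}(0)=0$; that is, $\bar q$ is a solution of \eqref{eq:startingBO} starting from the point $\Phi_\epsilon(P)$.

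Next I would prove that $\Phi_\epsilon(P)=q_\epsilon(P,T_\epsilon(P))$ is a bijection from $\mathbb S^{N-1}$ onto $V_\epsilon^{-1}(E)$. From the identity $q_\epsilon(P,t)=y_\epsilon(P,\sigma(P,t))$ and \eqref{eq:tep} we obtain $\Phi_\epsilon(P)=y_\epsilon(P,1)$, so it suffices to identify $V_\epsilon^{-1}(E)$ with $\mathcal W_\epsilon^{-1}(E)$ and invoke part~\eqref{en:1bis} of Proposition~\ref{th:F+V}. The identification is elementary: on $B[0;1]$ the potential $V_\epsilon$ coincides with $U_\epsilon(x)=\tfrac12\|x-\epsilon v\|^2\le\tfrac12(1+\epsilon)^2$, which is strictly below $E\ge 4$ for $\epsilon\in\left]0,\epsilon_*\right]$ (with $\epsilon_*\le 1$); hence $V_\epsilon^{-1}(E)\subset\{\|x\|>1\}$, where $V_\epsilon=W_\epsilon$. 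Consequently $V_\epsilon^{-1}(E)=W_\epsilon^{-1}(E)=y_\epsilon(\mathbb S^{N-1},1)$ and $\Phi_\epsilon$ inherits bijectivity from the homeomorphism in~\eqref{en:1bis}.

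To finish: given $x\in V_\epsilon^{-1}(E)$, set $P:=\Phi_\epsilon^{-1}(x)$. Then $\bar q$ as above is a solution of \eqref{eq:startingBO} with $\bar q(0)=x$, and by local uniqueness for the Cauchy problem $(q(0),\dot q(0))=(x,0)$ applied to the $C^1$ vector field $\mathrm{grad}\,V_\epsilon$ near $V_\epsilon^{-1}(E)$ (which is regular by~\eqref{en:7}), we conclude $q(x,t)=\bar q(t)=q_\epsilon(P,T_\epsilon(P)-t)$, which is \eqref{eq:gammaPx}. Uniqueness of $P$ follows from the injectivity of $\Phi_\epsilon$, since any admissible $P'$ would give $\Phi_\epsilon(P')=\bar q(0)=x$. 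The main point worth care is the verification that the terminal velocity vanishes and that $V_\epsilon^{-1}(E)$ lies entirely in the region $\{\|x\|>1\}$ where $V_\epsilon=W_\epsilon$; everything else is a direct application of results already proved.
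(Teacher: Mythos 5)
Your proposal is correct and follows essentially the same route as the paper: identify the unique $P$ via the homeomorphism $y_\epsilon(\cdot,1):\mathbb S^{N-1}\to\mathcal W_\epsilon^{-1}(E)$ from Proposition~\ref{th:F+V}~\eqref{en:1bis}, note $q_\epsilon(P,T_\epsilon(P))=x$, and conclude by uniqueness for the Cauchy problem \eqref{eq:startingBO}. The only difference is that you spell out details the paper leaves implicit (vanishing terminal velocity by energy conservation, time-reversal invariance, and that $V_\epsilon^{-1}(E)$ lies in $\{\Vert x\Vert>1\}$), which is a faithful filling-in rather than a different argument.
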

\begin{proof}
By \eqref{en:1bis} of Proposition~\ref{th:F+V}, for any $x \in V_\epsilon^{-1}(E)$, there exists a unique $P\in \mathbb{S}^{N-1}$ such that $y_\epsilon(P,1) = x$. This means that there exists a unique $P \in \mathbb{S}^{N-1}$ such that $q_\epsilon\big(P,T_\epsilon(P)\big)=x$, where  $q_\epsilon(P,t)$ is
the solution of \eqref{eq:startingesterne} in Lemma~\ref{lem:passo1}. Then, the local uniqueness of solutions for the Cauchy problem \eqref{eq:startingBO} gives  \eqref{eq:gammaPx}.
\end{proof}

\section{Examples with only two brake orbits and homoclinics}\label{sec:examples}
Fix $E \geq 4$ and $\epsilon_* \in ]0,1[$ as in Proposition \ref{th:F+V}. Let $\psi : \mathds{R}^+ \rightarrow \mathds{R}^+$ of class $C^2$ with the following properties:
\begin{itemize}
\item $\exists s_*\in]0,\tfrac18[$ such that $\psi(s_*)= E$;  
\item $\exists\delta_*\in]0,\tfrac12-s_*[$ such that $\psi(s_*)<E,\,\forall s\in]s_*,\tfrac12-\delta_*[$;
\item $\psi(s)=s,\,\forall s\,:\,|s-\tfrac12|\le\delta_*$.
\end{itemize}
\medskip
Consider the potential
\[
\tilde{U_\epsilon}(x)=\psi(\frac12\Vert x-\epsilon v\Vert^2),
\]
and set
\begin{equation}\label{eq:matching2}
\tilde{V_\epsilon}(x)=
\left\{
\begin{aligned}
& W_\epsilon(x) \text{ if } \Vert x \Vert > 1,\\
& \tilde{U_\epsilon}(x) \text{ if } \Vert x \Vert \leq 1,
\end{aligned}
\right.
\end{equation}
where $W_\epsilon$ is the potential $\mathcal W_\epsilon$ of
\eqref{W-epsilon} written in cartesian coordinates.

Observe that there exists $\epsilon_1 \in [0,\epsilon_*]$ such that, for any $\epsilon \in ]0,\epsilon_1]$, $\tilde V_\epsilon$ is a $C^2$--potential for which the potential well $\tilde V_\epsilon^{-1}(]-\infty,E])$ is homeomorphic to the $N$-dimensional annulus, and $\tilde V_\epsilon^{-1}(E)$ is a $C^2$--hypersurface consisting of two connected components each of them homeomorphic to $\mathbb{S}^{N-1}$.

%Let $\tilde U_0(x)=\psi(\frac12\Vert x \Vert^2)$
\begin{lem}\label{rem:preparatoria}
Let $\epsilon_*$ be chosen as in Proposition~\ref{th:F+V}.
There exists $\bar \epsilon \in]0, \epsilon_*]$ such that for any $\epsilon \in\left]0,\bar \epsilon\right]$
 and for any solution of the Cauchy problem
\begin{equation}\label{eq:ortU-epsilon}
\left\{
\begin{aligned}
& \ddot q + \text{grad }\tilde U_\epsilon(q) = 0 \\
& q(0) = P \\
& \dot q(0)=-\lambda_{\epsilon}P
\end{aligned}
\right.
\end{equation}
with $\lambda_\epsilon= \sqrt{2E-1+2\epsilon\langle P,v\rangle -\epsilon^2}$, there exists a continuous map $\mathbb S^{N-1}\ni P\mapsto t_\epsilon(P) > 0$ such that
\begin{multline*}
q\big(\left]0,t_\epsilon(P)\right[\big) \subset \{\Vert x-\epsilon v\Vert >\frac12, \;\Vert x \Vert < 1\},
\big\Vert q\big(t_\epsilon(P)\big)-\epsilon v\big\Vert =\frac12,\quad\text{and} \\
\left\langle \dot q\big(t_\epsilon(P)\big), q\big(t_\epsilon(P)\big)\right\rangle  <0, \;
\left\langle \dot q\big(t_\epsilon(P)\big), q\big(t_\epsilon(P)\big)-\epsilon v\right\rangle  <0.
\end{multline*}
\end{lem}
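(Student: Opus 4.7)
My plan is to treat Lemma~\ref{rem:preparatoria} as a perturbation of the radially symmetric limit problem obtained at $\epsilon=0$, where the dynamics reduces to a scalar ODE that can be analyzed directly.

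First I would solve the problem at $\epsilon=0$. The potential $\tilde U_0(x)=\psi\bigl(\tfrac12\|x\|^2\bigr)$ is rotationally invariant, and the data $q(0)=P$, $\dot q(0)=-\lambda_0 P$ are both parallel to $P$, so the solution stays on the line $\R P$. Writing $q_0(P,t)=r_0(t)P$ with $r_0(0)=1$ and $\dot r_0(0)=-\sqrt{2E-1}<0$, energy conservation becomes $\tfrac12\dot r_0^2+\psi\bigl(\tfrac12 r_0^2\bigr)=E$. Since $s_*<\tfrac18$ and $\psi(s)=s$ near $s=\tfrac12$, the hypotheses on $\psi$ force $\psi(s)<E$ throughout the interval $[\tfrac18,\tfrac12]$, so that
\[
\delta:=\min_{r\in[1/2,1]}\bigl(E-\psi(r^2/2)\bigr)>0,
\]
and $|\dot r_0|\ge\sqrt{2\delta}$ whenever $r_0\in[\tfrac12,1]$. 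Consequently $r_0$ decreases strictly and hits $\tfrac12$ at a unique time $t_0>0$ with $\dot r_0(t_0)\le-\sqrt{2\delta}$.

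Next I would invoke standard continuous dependence of solutions of~\eqref{eq:ortU-epsilon} on the parameter $\epsilon$ and on the initial datum $P$. Since $\lambda_\epsilon$ is continuous in $(\epsilon,P)$, $\mathrm{grad}\,\tilde U_\epsilon$ varies continuously in $\epsilon$ on a fixed compact neighborhood of $\{r_0(t)P:t\in[0,t_0+1],\,P\in\mathbb{S}^{N-1}\}$, and $\mathbb{S}^{N-1}$ is compact, the solution $q_\epsilon(P,\cdot)$ converges to $r_0(\cdot)P$ in $C^1([0,t_0+1])$ uniformly in $P$ as $\epsilon\to 0^+$. I would then define $t_\epsilon(P)$ as the first positive zero of $t\mapsto\|q_\epsilon(P,t)-\epsilon v\|^2-\tfrac14$. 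In the limit this function equals $r_0(t)^2-\tfrac14$, which vanishes transversely at $t_0$ with derivative $\dot r_0(t_0)\le-\sqrt{2\delta}<0$. By uniform $C^1$-closeness and the implicit function theorem, $t_\epsilon(P)$ exists, is unique, depends continuously on $P$, and satisfies $t_\epsilon(P)\to t_0$ uniformly.

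It then remains to verify the pointwise conclusions. The bound $\|q_\epsilon(t)-\epsilon v\|>\tfrac12$ for $t\in\left]0,t_\epsilon(P)\right[$ follows from $C^0$-closeness to $r_0$, which is strictly decreasing with derivative uniformly bounded below by $\sqrt{2\delta}$. The bound $\|q_\epsilon(t)\|<1$ on $\left]0,t_\epsilon(P)\right]$ splits into two regimes: on a small fixed initial interval $[0,t_1]$ use $\tfrac{d}{dt}\|q_\epsilon\|^2(0)=-2\lambda_\epsilon<-c<0$ uniformly in $P$; on $[t_1,t_\epsilon(P)]$ use $C^0$-closeness to $r_0$, which is bounded above by $r_0(t_1)<1$. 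Finally, at $t=t_\epsilon(P)$ both inner products $\langle\dot q_\epsilon,q_\epsilon\rangle$ and $\langle\dot q_\epsilon,q_\epsilon-\epsilon v\rangle$ converge, uniformly in $P$, to $r_0(t_0)\dot r_0(t_0)=\tfrac12\dot r_0(t_0)\le-\tfrac12\sqrt{2\delta}<0$, and are therefore strictly negative for every $\epsilon\in\left]0,\bar\epsilon\right]$ once $\bar\epsilon$ is small enough.

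The hard part will be securing \emph{uniformity in} $P\in\mathbb{S}^{N-1}$ of every perturbative estimate, so that a single threshold $\bar\epsilon$ suffices for every $P$. Uniformity is delivered by compactness of $\mathbb{S}^{N-1}$ combined with the key simplification at $\epsilon=0$: the scalar profile $r_0$ does not depend on $P$ (by rotational invariance), hence the gap $\delta$, the transversality bound at $t_0$, and the $C^1$-rate of convergence are automatically $P$-uniform.
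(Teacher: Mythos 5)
Your proof is correct and follows exactly the route the paper takes: the paper's own proof is the single sentence ``It is a simple consequence of the $C^2$-convergence of $\tilde U_\epsilon$ to the radial potential $\tilde U_0$ on the unit disk,'' and your argument is precisely a careful implementation of that idea (solve the radial $\epsilon=0$ problem, then use $P$-uniform continuous dependence and transversality at the crossing of $\{\Vert x-\epsilon v\Vert=\tfrac12\}$). Your write-up supplies the details the paper omits, including the correct use of the hypotheses on $\psi$ to get the uniform gap $E-\psi>0$ on $[\tfrac18,\tfrac12]$.
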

\begin{proof}
It is a simple consequence of the $C^2$-convergence of $\tilde U_\epsilon$ to the radial potential $\tilde U_0$ on the unit disk.
\end{proof}

Set $\mathcal D_1 = \{\tfrac12\Vert x-\epsilon v\Vert^2=s_*\}$ and denote by $\mathcal D_2$ the other connected component of $\tilde{V_\epsilon}^{-1}(E)$.
Using again the $C^2$--convergence to a radial potential, the following result can be also established.
\begin{lem}\label{lem:CC}
 There exists $\epsilon_0 \in\left ]0,\bar \epsilon\right]$ such that for any $\epsilon \in\left]0,\epsilon_0\right]$, any brake orbit
 connects $\mathcal D_1$ with $\mathcal D_2$.
\end{lem}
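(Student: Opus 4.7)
I would argue by contradiction and compactness, exploiting the $C^2$-convergence of $\tilde V_\epsilon$ to a radial potential. Suppose the statement fails: after extraction one obtains $\epsilon_n\to0^+$ and brake orbits $q_n:[0,T_n]\to\mathds R^N$ of $\tilde V_{\epsilon_n}$, where $T_n$ is the first positive zero of $|\dot q_n|$, such that $q_n(0)$ and $q_n(T_n)$ lie on the same component $\mathcal D_{j_0}^{(\epsilon_n)}$ of $\tilde V_{\epsilon_n}^{-1}(E)$ (for some fixed $j_0\in\{1,2\}$).

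At $\epsilon=0$ both $\tilde U_0(x)=\psi(\tfrac12\|x\|^2)$ and (by Proposition~\ref{th:F+V}~\eqref{en:5}) $W_0$ are radial, so $\tilde V_0$ depends only on $\|x\|$; the level set $\tilde V_0^{-1}(E)$ consists of two concentric spheres $\mathcal D_j^{(0)}$ of radii $r_1=\sqrt{2s_*}<1<r_2$. Any solution of the Newton equation with vanishing initial velocity has zero angular momentum, hence is a radial oscillation between the two spheres with finite half-period $T_0:=\int_{r_1}^{r_2}(2(E-\tilde V_0(r)))^{-\frac12}\,\mathrm dr$. Thus every brake orbit of $\tilde V_0$ connects $\mathcal D_1^{(0)}$ to $\mathcal D_2^{(0)}$. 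The $C^2$-convergence $\tilde V_{\epsilon_n}\to\tilde V_0$ on $\overline{\mathcal O}$ combined with the regularity of $E$ yields Hausdorff convergence $\mathcal D_j^{(\epsilon_n)}\to\mathcal D_j^{(0)}$. A further extraction gives $q_n(0)\to x_\infty\in\mathcal D_{j_0}^{(0)}$, and continuous dependence of ODE solutions on data and parameters yields $q_n\to q_\infty$ in $C^2$ uniformly on each fixed $[0,L]$, where $q_\infty$ is the radial brake orbit of $\tilde V_0$ starting at $x_\infty$.

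A uniform lower bound $T_n\ge\tau_0>0$ follows from $|\text{grad}\,\tilde V_{\epsilon_n}(q_n(0))|\ge c_0>0$ (regularity of $E$, uniform in small $\epsilon$). Passing $\dot q_n(T_n)=0$ to the limit, any bounded accumulation value $T^*$ of $T_n$ satisfies $\dot q_\infty(T^*)=0$, so $T^*=kT_0$ for some $k\in\mathds N$. Once $T^*=T_0$ is established, $q_n(T_n)\to q_\infty(T_0)\in\mathcal D_{3-j_0}^{(0)}$; by Hausdorff convergence and the positive separation between the two components, $q_n(T_n)\in\mathcal D_{3-j_0}^{(\epsilon_n)}$ for large $n$, contradicting the hypothesis.

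The main obstacle is thus to exclude $T^*=kT_0$ with $k\ge2$ and the escape $T_n\to\infty$. For this I would use the inner/outer decomposition of Lemmas~\ref{lem:passo1}, \ref{lem:passo2} and \ref{rem:preparatoria}. Inside the unit ball, $\tilde U_\epsilon$ is radial about $\epsilon v$, so the angular momentum $L_n:=(q_n-\epsilon_n v)\times\dot q_n$ is conserved during the inner phase and equals $L_n=\lambda_{\epsilon_n}(P_n)\,\epsilon_n\,v\times P_n=O(\epsilon_n)$ at the inner entry point $P_n\in\mathbb S^{N-1}$. If $v\times P_n=0$, i.e.\ $P_n=\pm v$, the inner motion is strictly radial about $\epsilon_n v$ and touches $\mathcal D_1^{(\epsilon_n)}$ with zero velocity, producing an intermediate zero of $\dot q_n$ on $\mathcal D_1$ and forcing $j_0=1$ --- a case excluded by a symmetric argument. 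Otherwise $|\dot q_n|\ge|L_n|/r>0$ throughout the inner phase, so every zero of $\dot q_n$ lies in the outer region, where Lemma~\ref{lem:passo2} requires the orbit to cross $\mathbb S^{N-1}$ with the specific outward radial velocity $\lambda_\epsilon P$; combining this closing constraint with the nondegenerate tangency $\ddot q_\infty(T_0)\ne 0$ and the $C^2$-convergence of $q_n$ to $q_\infty$ near $T_0$ pins $T_n$ to $T_0$ and yields the sought contradiction.
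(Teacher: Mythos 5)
The paper itself disposes of this lemma in a single sentence (an appeal to the $C^2$--convergence to a radial potential), so your attempt to supply a genuine proof is in the right spirit, and your setup is sensible: the compactness/contradiction scheme, the identification of the two dangerous scenarios ($T_n$ accumulating at $kT_0$ with $k\ge 2$, or $T_n\to\infty$), and the two exact tools available (conservation of angular momentum about $\epsilon v$ inside the unit ball, where $\tilde V_\epsilon$ is genuinely central, and the characterization in Lemma~\ref{lem:passo2} of the orbits that come to rest on $\mathcal D_2$). However, the decisive step is missing. You assert that the closing constraint of Lemma~\ref{lem:passo2}, the nondegeneracy $\ddot q_\infty(T_0)\neq0$ and the $C^2$--convergence ``pin $T_n$ to $T_0$''. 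They cannot: the limiting radial system itself possesses the concatenated trajectory which leaves $\mathcal D_2$ at rest, grazes $\mathcal D_1$ at time $T_0$ and is again at rest on $\mathcal D_2$ at time $2T_0$, so a hypothetical $\mathcal D_2$--$\mathcal D_2$ brake orbit of $\tilde V_{\epsilon_n}$ with $T_n\approx 2T_0$ is perfectly compatible with convergence to $q_\infty$ on compact intervals; no soft continuous-dependence argument can exclude it. What actually has to be shown is that for $\epsilon>0$ and $P\neq\pm v$ the inner passage cannot terminate with a crossing of $\mathbb S^{N-1}$ whose velocity is \emph{exactly} radial with respect to the origin (by Lemma~\ref{lem:passo2} this is the only way the orbit can subsequently stop on $\mathcal D_2$). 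Conservation of angular momentum about $\epsilon v$ alone does not do this: it only forces the exit point to lie on the other side of the $v$-axis, and the exit is exactly radial precisely when the apsidal line of the inner central arc is the $v$-axis. Ruling that out requires a quantitative scattering estimate --- e.g.\ that the angle swept about $\epsilon v$ from entry to perihelion is $O(\epsilon\sin\theta)$, using that $E$ is a regular value on $\mathcal D_1$ to bound the transit time uniformly, so the perihelion cannot reach the axis unless $\sin\theta=0$, with a separate treatment of entry angles $\theta=O(\epsilon)$ --- and nothing of this kind appears in your proposal.

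A second gap: the case of both endpoints on $\mathcal D_1$ is not ``symmetric'' and is not covered. An orbit starting at rest on $\mathcal D_1$ has zero angular momentum about $\epsilon v$, hence leaves the unit ball radially with respect to $\epsilon v$ (so \emph{not} radially with respect to the origin unless it starts on the axis); the danger here is that after its outer excursion, where no conservation law about $\epsilon v$ is available and Lemma~\ref{lem:passo2} gives no information unless the orbit stops, it re-enters the ball with zero angular momentum about $\epsilon v$ and stops on $\mathcal D_1$ again. This needs its own estimate on how the outer (merely axially symmetric) dynamics perturbs the angular momentum, or some other exact mechanism; invoking symmetry with the $\mathcal D_2$ case is not legitimate. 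Finally, the escape scenario $T_n\to\infty$, which you yourself list as an obstacle, is never actually addressed.
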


We are now ready to prove the following theorem that gives examples of potential wells homeomorphic to the annulus
having only two brake orbits.

\begin{teo}\label{thm:deuBO} Fix $E \geq 4 $ as in Proposition \ref{th:F+V}. There exists $\tilde \epsilon \in\left]0,\epsilon_0\right]$ such that, for any
$\epsilon\in\left]0,\tilde\epsilon\right]$, there exist
only two brake orbits in the potential well $\tilde V_\epsilon^{-1}\big(\left]-\infty,E\right]\big)$ connecting the two different components of
$\tilde{V_\epsilon}^{-1}(E)$.
\end{teo}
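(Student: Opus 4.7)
The plan is to trace a brake orbit $q:[0,T]\to\tilde V_\epsilon^{-1}(]-\infty,E])$ with $\dot q(0)=\dot q(T)=0$ through the two subregions $\{\|x\|\le 1\}$ and $\{\|x\|\ge 1\}$ of the well, combining the rigid Maupertuis parametrization of the exterior orbit given by Lemmas~\ref{lem:passo1} and~\ref{lem:passo2} with the central-force symmetry of $\tilde U_\epsilon$ about $\epsilon v$ on the interior in order to pin down the admissible trajectories.

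First I would fix $\tilde\epsilon\in\,]0,\epsilon_0]$ small enough so that Proposition~\ref{th:F+V}, Lemma~\ref{lem:passo1}, Lemma~\ref{rem:preparatoria} and Lemma~\ref{lem:CC} all apply. By Lemma~\ref{lem:CC} it suffices to count brake orbits connecting $\mathcal D_1$ with $\mathcal D_2$, and up to time-reversal we may assume $q(0)\in\mathcal D_1$ and $q(T)\in\mathcal D_2$. The $C^2$-proximity of $\tilde V_\epsilon$ to a radial potential provided by part~\eqref{en:5} of Proposition~\ref{th:F+V}, combined with the monotonicity of $\|q_\epsilon(P,\cdot)\|$ from Lemma~\ref{lem:passo1} and the inward geometry furnished by Lemma~\ref{rem:preparatoria}, guarantees that $q$ crosses $\mathbb{S}^{N-1}$ transversally at exactly one instant $t_0\in\,]0,T[$, with $q([0,t_0])\subset B[0;1]$ and $q([t_0,T])\subset\mathds R^N\setminus B[0;1]$.

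On the exterior segment $[t_0,T]$, Lemma~\ref{lem:passo2} yields a unique $P=q(t_0)\in\mathbb{S}^{N-1}$ with $q(t)=q_\epsilon(P,t-t_0)$, so in particular $\dot q(t_0)=\lambda_\epsilon P$ is radial with respect to the origin. On the interior segment $[0,t_0]$ the potential $\tilde V_\epsilon=\psi(\tfrac12\|x-\epsilon v\|^2)$ is rotationally symmetric about $\epsilon v$, hence the angular-momentum bivector $L(t)=(q(t)-\epsilon v)\wedge\dot q(t)$ is conserved. Since $\dot q(0)=0$ gives $L(0)=0$, we have $L\equiv 0$ on $[0,t_0]$; evaluating at $t_0$:
\[
0=L(t_0)=(P-\epsilon v)\wedge(\lambda_\epsilon P)=-\epsilon\lambda_\epsilon\,(v\wedge P),
\]
and since $\epsilon>0$ and $\lambda_\epsilon>0$ (for $E\ge 4$ and $\epsilon$ small), this forces $P\in\{v,-v\}$.

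For each of the two admissible values $P=\pm v$, existence and uniqueness of a matching brake orbit follow immediately: on $[0,t_0]$ the identity $L\equiv 0$ confines the orbit to the line $\mathds R\,v$ through $\epsilon v$, reducing the dynamics to the one-dimensional energy equation $\tfrac12\dot r^2+\psi(\tfrac12 r^2)=E$ in the radial coordinate $r=\|q-\epsilon v\|$, starting at rest at $r=\sqrt{2s_*}$; the hypotheses $\psi(s_*)=E$ and $\psi(s)<E$ on $\,]s_*,\tfrac12(1+\epsilon)^2]$, valid for $\epsilon$ small, make $r$ strictly monotone up to $r=1\mp\epsilon$ at $t_0$, at which $\dot r=\lambda_\epsilon$ matches the radial velocity prescribed by Lemma~\ref{lem:passo1}, so the exterior portion is supplied by that lemma. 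This produces exactly two distinct brake orbits. The main potential obstacle is to ensure that $q$ crosses $\mathbb{S}^{N-1}$ exactly once and that no parasitic brake orbit exists lying entirely inside $B[0;1]$ or entirely outside $B[0;1]$; both issues are settled by the $C^2$-proximity to the radial model and by the monotonicity of $\|q_\epsilon(P,\cdot)\|$ and of $\|q-\epsilon v\|$ on the respective segments, exactly as in the proof of Lemma~\ref{lem:CC}.
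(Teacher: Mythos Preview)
Your argument is correct and follows the same overall architecture as the paper's proof: use Lemmas~\ref{lem:passo1}--\ref{lem:passo2} on the exterior piece to force the crossing of $\mathbb S^{N-1}$ to be orthogonal (i.e., $\dot q(t_0)=\lambda_\epsilon P$), then exploit the central symmetry of $\tilde U_\epsilon$ about $\epsilon v$ on the interior piece to reduce the admissible crossing points to $P=\pm v$. The paper orients the orbit from $\mathcal D_2$ rather than $\mathcal D_1$ and, at the final step, simply invokes \cite[Example~1]{arma} to conclude that only two such $P_0$ exist; you instead carry out the reduction explicitly via conservation of the angular momentum $L=(q-\epsilon v)\wedge\dot q$, obtaining $0=L(t_0)=-\epsilon\lambda_\epsilon\,v\wedge P$. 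This makes your write-up self-contained and slightly more transparent than the paper's, at the cost of spelling out the one-dimensional radial analysis for existence. The only point to tighten is the single-crossing claim: it is cleanest to argue, as the paper does, starting from the $\mathcal D_2$ endpoint and invoking the monotonicity of $\|q_\epsilon(P,\cdot)\|$ in Lemma~\ref{lem:passo1} to reach $\mathbb S^{N-1}$, then Lemma~\ref{rem:preparatoria} to stay inside $B[0;1]$ thereafter, rather than appealing to $C^2$-proximity in the abstract.
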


\begin{proof}
Let $q$ be a brake orbit of energy $E$. By Lemma \ref{lem:CC} we can assume that $q(0) \in \mathcal D_2$.
%Moreover $\dot q(0)=0$ and $q(T+t)=q(T-t)$ for all $t$, where $T>0$ is the first positive instant at which $q(T)\in \mathcal D_1$.
By Lemmas~\ref{lem:passo1} and \ref{lem:passo2},
there exists $P_0\in\mathbb S^{N-1}$ which is the first intersection point of $q$ with $\mathbb S^{N-1}$, occurring at the time $T_\epsilon(P_0)>0$,
and $\dot q\big(T_\epsilon(P_0)\big)$ is orthogonal to $\mathbb S^{N-1}$ at $P_0$.

Note that, by the monotonicity property in Lemma \ref{lem:passo1},
\begin{equation}\label{eq:arrivo-partenza}
\Vert q(t) \Vert > 1 \text{ for every }t\in\left[0,T_\epsilon(P_0)\right[.
\end{equation}

\begin{figure}
\includegraphics[scale=.7]{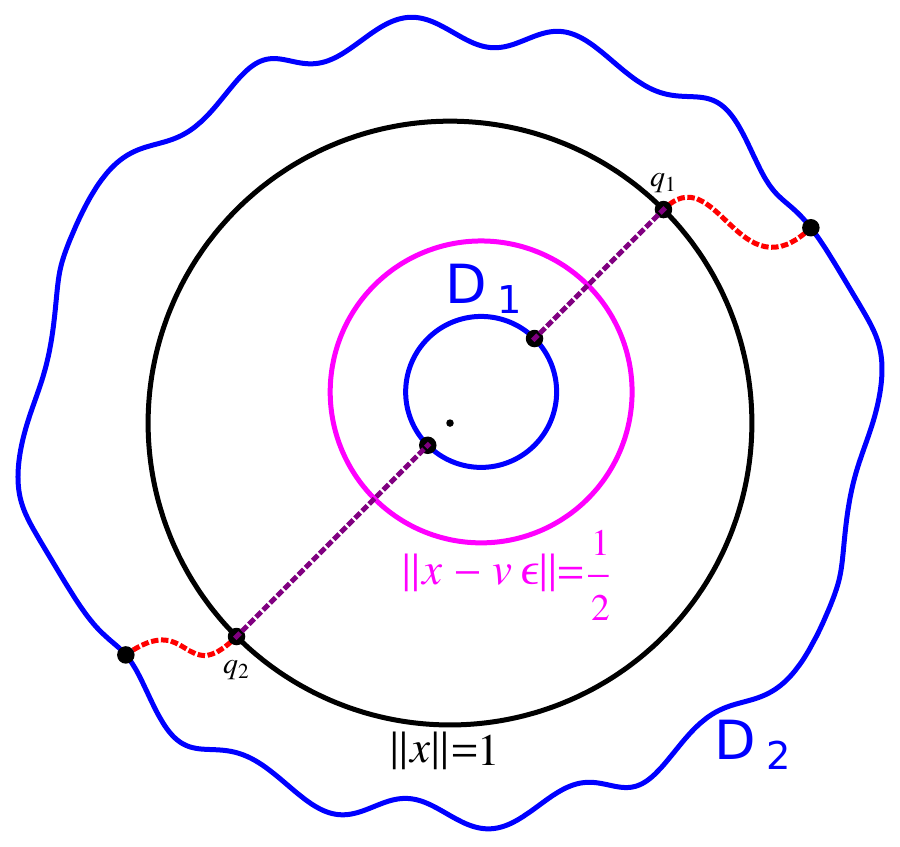}
\caption{Behavior of the brake orbits with energy $E$ for the potential $\tilde{V_\epsilon}$ \eqref{eq:matching2}, whenever $\epsilon$ is sufficiently small. $\mathcal D_1$ and $\mathcal D_2$ (in blue) are the connected components of the $E$--level set of $\tilde{V_\epsilon}$.}
\label{fig:3}
\end{figure}
Now, by \eqref{eq:arrivo-partenza} and Lemma \ref{rem:preparatoria}, at the instant $T_\epsilon(P_0)$ the curve $q$ moves (orthogonally) inside the ball $\{x:\Vert x \Vert < 1\}$ and it arrives on $\mathcal D_1$ with null speed at the instant $t_\epsilon(q(T_\epsilon(P_0))$.

Then, considering \cite[Example 1]{arma} and using Lemmas \ref{lem:passo1} and \ref{lem:passo2} we see that there are only two points $q_1$ and $q_2$ as $P_0$ above (see Figure~\ref{fig:3}), and this allows to get the conclusion of the proof.
\end{proof}

Note that if we choose $\psi$ with the additional property that $s=s_*$ is a non degenerate maximum point, by the same proof of the above Theorem we also get the following result.
\begin{teo}\label{thm:deuHOM} Fix $E \geq 4 $ as in Proposition \ref{th:F+V}. There exists $\tilde \epsilon \in\left]0,\epsilon_0\right]$ such that, for any
$\epsilon\in\left]0,\tilde\epsilon\right]$, there exist
only two homoclinics in the potential well $\tilde V_\epsilon^{-1}\big(\left]-\infty,E\right]\big)$ emanating from the nondegenerate maximum point and reaching the regular maximal hypersurface included in $\tilde V_\epsilon^{-1}(E)$.
\end{teo}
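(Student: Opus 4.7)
The plan is to run the proof of Theorem \ref{thm:deuBO} almost verbatim, with the finite-time braking on $\mathcal D_1$ replaced by an asymptotic approach to the nondegenerate maximum. Since $\tilde V_\epsilon$ coincides with $W_\epsilon$ outside the unit ball and does not depend on $\psi$ there, the exterior analysis is untouched: Lemmas \ref{lem:passo1} and \ref{lem:passo2} continue to provide, for every point of $\mathcal D_2$, a uniquely determined first crossing $P_0\in\mathbb S^{N-1}$ of the energy-$E$ orbit, at which the velocity is orthogonal to $\mathbb S^{N-1}$ and points inward.

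Next I would revisit Lemma \ref{rem:preparatoria} under the nondegeneracy assumption on the maximum of $\psi$ at $s=s_*$. Now $\mathcal D_1$ belongs to the critical set of $\tilde V_\epsilon$ at energy $E$, and my goal is to show that the solution of the inward Cauchy problem \eqref{eq:ortU-epsilon} no longer reaches $\mathcal D_1$ in finite time but instead converges to the nondegenerate maximum as $t\to+\infty$. The argument I have in mind is perturbative: for $\epsilon=0$ the potential $\tilde U_0$ is radial, so each inward orbit lies on a ray through the center and coincides with (a piece of) the stable manifold of the maximum; then, by the $C^2$-convergence $\tilde U_\epsilon\to\tilde U_0$ provided by item~\ref{en:5} of Proposition~\ref{th:F+V} and by the stable manifold theorem for the Hamiltonian flow at the nondegenerate equilibrium, the same property persists for all sufficiently small $\epsilon$.

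Having established the asymptotic approach, the time-reversal symmetry of $\ddot q+\mathrm{grad}\,\tilde V_\epsilon(q)=0$ extends the orbit, prolonged backwards from the brake on $\mathcal D_2$, to a full homoclinic: it emanates from the nondegenerate maximum, meets $\mathbb S^{N-1}$ orthogonally, brakes on $\mathcal D_2$, and returns to the same maximum. The counting step is then unchanged from Theorem \ref{thm:deuBO}: by \cite[Example~1]{arma} only two admissible crossings $P_0\in\mathbb S^{N-1}$ occur, producing exactly two distinct homoclinics.

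The main obstacle I anticipate is the asymptotic control near the nondegenerate maximum. I would handle it either through the stable manifold theorem---whose hypotheses hold because a negative-definite Hessian of the potential yields $N$ real positive and $N$ real negative eigenvalues for the linearized Hamiltonian flow---or, more elementarily, through a Morse normal form near the maximum giving a quadratic bound on $\tilde V_\epsilon-E$ and a direct integral estimate showing that the approach time along the inward orbit diverges.
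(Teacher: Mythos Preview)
Your high-level plan---rerun the proof of Theorem~\ref{thm:deuBO} with the finite-time brake on $\mathcal D_1$ replaced by an asymptotic approach---matches the paper exactly (its entire argument is the sentence ``by the same proof of the above Theorem''). Your elaboration, however, has a genuine gap.

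First a geometric correction: the critical set of $\tilde V_\epsilon$ at level $E$ inside the unit ball is the whole sphere $\mathcal D_1=\{\tfrac12\Vert x-\epsilon v\Vert^2=s_*\}$, not an isolated point; at each of its points the Hessian of $\tilde U_\epsilon$ has rank one (one negative radial eigenvalue, $N-1$ zeros tangent to $\mathcal D_1$), so neither the negative-definite Hessian nor the hyperbolic stable manifold theorem applies as you state them.

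More seriously, the perturbative claim that for small $\epsilon$ \emph{every} solution of \eqref{eq:ortU-epsilon} converges to $\mathcal D_1$ as $t\to+\infty$ is false---and if it were true the theorem itself would fail, since you would produce a homoclinic for each $P\in\mathbb S^{N-1}$. Inside the unit ball the force $-\mathrm{grad}\,\tilde U_\epsilon$ is central about $\epsilon v$, so the angular momentum about $\epsilon v$ is conserved along \eqref{eq:ortU-epsilon}; the initial data $q(0)=P$, $\dot q(0)=-\lambda_\epsilon P$ give a nonzero angular momentum unless $P$ is parallel to $v$, i.e.\ unless $P=\pm v$. For any other $P$ the centrifugal barrier raises the effective radial potential above $E$ at the radius $\sqrt{2s_*}$, and the orbit turns back before reaching $\mathcal D_1$. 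This obstruction is precisely the counting mechanism encoded in \cite[Example~1]{arma}, not an extra filter applied after the fact.

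The correct logic mirrors Theorem~\ref{thm:deuBO}: start from a \emph{given} homoclinic $q$; Lemmas~\ref{lem:passo1}--\ref{lem:passo2} produce the orthogonal crossing at some $P_0\in\mathbb S^{N-1}$; the homoclinic hypothesis itself (not a revised Lemma~\ref{rem:preparatoria}) says the interior portion is asymptotic to $\mathcal D_1$; and then the angular-momentum obstruction, equivalently \cite[Example~1]{arma}, forces $P_0\in\{v,-v\}$. Conversely, for these two values the interior orbit is purely radial about $\epsilon v$, and there your ``elementary'' divergent-time estimate (quadratic vanishing of $E-\psi$ at $s_*$) is exactly the right tool to show the asymptotic approach, completing precisely two homoclinics.
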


\end{document}